\newtheorem{theorem}{Theorem}
\newtheorem{proposition}{Proposition}
\newtheorem{lemma}{Lemma}
\newtheorem{definition}{Definition}
\newtheorem{remark}{Remark}
\newtheorem{coro}[theorem]{Corollary}
\newtheorem{hypothesis}[theorem]{Hypothesis}
\newcommand{\E}{{\mathbb E }}
\renewcommand{\P}{{\mathbb P}}
\begin{document}

 \begin{minipage}{0.85\textwidth}
\end{minipage}
\begin{center}
	\large\bf
Rigidity of eigenvalues for $\beta$ ensemble in multi-cut regime
\end{center}

\begin{center} 
		\begin{minipage}{0.3\textwidth}
			\begin{center}
				Yiting Li  \\
				\footnotesize 
				{Korea Advanced Institute of Science and Technology}\\
				{\it yitingli@kaist.ac.kr}
			\end{center}
		\end{minipage} 
\end{center}

\begin{abstract}
For a $\beta$ ensemble on $\Sigma^{(N)}=\{(x_1,\ldots,x_N)\mathbb R^N|x_1\le\cdots\le x_N\}$ with real analytic potential and general $\beta>0$, under the assumption that its equilibrium measure is supported on $q$ intervals where $q>1$, we prove the following rigidity property for its particles.
\begin{enumerate}
	\item In the bulk of the spectrum,  with overwhelming probability, the distance between a particle and its classical position is of order $O(N^{-1+\epsilon})$.
	\item If $k$ is close to 1 or close to $N$, i.e., near the extreme edges of the spectrum, then with overwhelming probability, the distance between the $k$-th largest particle  and its classical position is of order $O(N^{-\frac{2}{3}+\epsilon}\min(k,N+1-k)^{-\frac{1}{3}})$.
\end{enumerate}
Here $\epsilon>0$ is an arbitrarily small constant. Our main idea is to decompose the multi-cut $\beta$ ensemble as a product of probability measures on spaces with lower dimensions and show that each of these measures is very close to a $\beta$ ensemble in one-cut regime for which the  rigidity of particles is known.
\end{abstract}

	 \section{Introduction}\label{chap:introduction}
	 \subsection{Background}\label{background}
	 A $\beta$ ensemble (or log-gas at inverse temperature $\beta$) is a probability measure $\mu=\mu(N)$ on
	 $\Sigma^{(N)}:=\{(x_1,\ldots,x_N)\in\mathbb R^N|x_1\le\cdots\le x_N\}$:
	 \begin{align}\label{density of the beta ensemble}
	 	\mu=\frac{1}{Z(\mu)}\exp\Big(-\frac{N\beta}{2}\sum_{i=1}^NV(x_i)\Big)\prod\limits_{i<j}|x_i-x_j|^\beta dx_1\cdots dx_N
	 \end{align}
	 where $Z(\mu)$ is the normalization constant and $dx_1\cdots dx_N$ is the Lebesgue measure. The constant $\beta>0$ is called $inverse$ $temperature$ and the function $V(x)$ is called $potential$. Each $x_i$ is called a $particle$ or an $eigenvalue$.
	 
	 For $\beta=1$ (resp. $\beta=2$, $\beta=4$), $\mu$ is the measure induced by the eigenvalues of a random symmetric (resp. Hermitian, quaternion Hermitian) $N$ by $N$ matrix $M_N$ with law $e^{-\frac{N\beta}{2}\text{tr}V(M_N)}dM_N$ where $dM_N$ is the Lebesgue measure on the set of $N$ by $N$ symmetric (resp. Hermitian, quaternion Hermitian) matrices. In case $V(x)$ is a quadratic polynomial, $\mu$ can be induced from a tri-diagonal random matrix model for all $\beta>0$ (see \cite{DE}), and its local limits can be described by  stochastic differential equations (see \cite{VV}). For $V(x)=\frac{x^2}{2}$ and $\beta=1$ (resp. $\beta=2$, $\beta=4$),  $\mu$ is the measure induced by the eigenvalues of a random matrix belonging to the classical Gaussian invariant GOE (resp. GUE, GSE) ensemble. The Gaussian invariant ensembles were  studied by Dyson, Gaudin and Mehta and the celebrated sine process was obtained in the bulk, see \cite{Mehta} for details.
	 
	 Consider the empirical measure of $\mu$:
	 \begin{align}\label{eq:284}
	 	L_N:=\frac{1}{N}\sum\limits_{i=1}^N\delta_{x_i}
	 \end{align}
	 where $\delta$ is the delta function. Obviously $L_N$ is a random probability measure on $\mathbb{R}$. It is well known that if $V(x)$ is continuous and increases faster than $2\ln|x|$ as $|x|\to+\infty$, then $L_N$
	 converges almost surely and in expectation to a compactly supported equilibrium measure $\mu_{eq}$ as $N$ tends to infinity. If $V$ is also real analytic, then $\mu_{eq}$ admits a continuous density function $\rho(t)$ and its support is  the union of a finite number of closed intervals (see, for example, Theorem 1.1 of \cite{BG_multi_cut}): 
	 $$\text{supp}(\mu_{eq})=\cup_{i=1}^q[A_i,B_i]\quad\quad(B_i<A_{i+1}).$$
	 If $q=1$, then we say that $\mu$ is in the one-cut regime. Otherwise we say that  $\mu$ is in the multi-cut regime and call each interval $[A_i,B_i]$ a $cut$.
	 
	 Bourgade,  Erd\H{o}s and Yau \cite{BEY_duke,BEY_bulk,BEY_edge} studied the $\beta$ ensemble in the one-cut regime. They proved the rigidity of eigenvalues in the bulk (see \cite{BEY_duke,BEY_bulk}) and also the rigidity of eigenvalues near the edges (see \cite{BEY_edge}). More precisely, if we  define the $k$-th quantile $\eta_k$ as in \eqref{eq:268}, then their theorem states that in the one-cut regime, for any $\epsilon>0$, with overwhelming $\mu$-probability, 
	 \begin{align}\label{eq:282}
	 	x_k-\eta_k
	 \end{align}
	 is of order $N^{-1+\epsilon}$ in the bulk of the spectrum and of order $N^{-\frac{2}{3}+\epsilon}$ near the edges of the spectrum. So the deterministic point $\eta_k$ is the $classical$ $position$ of the random variable $x_k$.

	 Bekerman \cite{Bekerman} proved that for $\beta$ ensembles in the multi-cut regime, the eigenvalues near $A_2$,\ldots,$A_q$, $B_1$,\ldots,$B_{q-1}$ can jump to the adjacent cut with a positive probability.  Therefore rigidity for eigenvalues near these edges does not hold.
	 
	 In this paper we will study the $\beta$ ensemble in the multi-cut regime and prove:
	 \begin{itemize}
	 	\item the rigidity of eigenvalues in the bulk of the spectrum;
	 	\item the rigidity of eigenvalues near the extreme edges (i.e., $A_1$ and $B_q$) of the spectrum.
	 \end{itemize}

	 Recently, Claeys, Fahs, Lambert and Webb \cite{Claeys+Fahs+Lambert+Webb} proved the following interesting result on rigidity for $\beta$ ensemble in the one-cut regime. If the support of the equilibrium measure is $[-1,1]$ and  $\beta=2$, then
	 $$\lim\limits_{N\to\infty}\mathbb P^\mu\Big(\frac{1-\epsilon}{\pi}\frac{\log N}{N}\le\max _{i=1,\ldots,N}\Big(p_V(\eta_i)\sqrt{1-\eta_i^2}|x_i-\eta_i|\Big)\le \frac{1+\epsilon}{\pi}\frac{\log N}{N}\Big)=1$$
	 where $\epsilon>0$ is an arbitrarily small constant and 
	 $p_V(x)$ is an $N$-independent deterministic function which never vanishes on $[-1,1]$. See Theorem 1.2 of \cite{Claeys+Fahs+Lambert+Webb}.
	 
	 If $x_1\le\cdots\le x_N$ are eigenvalues of a Wigner matrix, then it is well known that \eqref{eq:284} converges to the semicircle distribution. So we can define the classical locations and talk about the rigidity of the eigenvalues for a Wigner matrix in the same way. Moreover, the rigidity of eigenvalues is closely related to the  universality for $\beta$ ensemble as well as for Wigner matrix. The $global$ $universality$ states that, for a regular enough test function $f(x)$, the fluctuation of 
	 $$\sum_i f(x_i)$$ 
	 converges to a Gaussion distribution. For Wigner matrix, the global universality was first obtained  by Johansson \cite{Johansson} for GUE.   Bai and Yao \cite{Bai+Yao} proved the global universality for general Wigner matrix with analytic test function. Using the rigidity of eigenvalues, Sosoe and Wong \cite{Sosoe+Wong}  extended the result to $H^{1+\epsilon}$ test function. For $\beta$ ensemble, in the one-cut regime, Lambert, Ledoux and Webb \cite{Lambert+Ledoux+Webb} proved the global universality with the help of rigidity. For $\beta$ ensemble in the multi-cut regime, the global universality does not hold in general, but it was observed by M. Shcherbina \cite{Shcherbina_fluctuation} and proved by Bekerman, Lebl\'e and Serfaty \cite{Bekerman+Leble+Serfaty} that, if the test function $f(x)$ satisfies a certain system of equations, then the global universality holds.
	 
	 The $local$ $universality$ can be formulated in different ways, but they can all be understood as that  when $N$ goes to infinity, $N^r\cdot x_k$ behaves in a way independent of the distribution of the entries of the Wigner matrix  or the choice of the potential of the $\beta$ ensemble. Here $r=1$ in the bulk and $r=2/3$ at the edges.
	 The first significant result on local universality was obtained by Erd\H{o}s, Schlein and Yau for the symmetric Wigner matrix. See \cite{Erdos+Schlein+Yau3}. The most general result on local universality for generalized Wigner matrix was proved by Bourgade, Erd\H{o}s, Yau and Yin  and their proof uses the rigidity of eigenvalues. See \cite{BEYY}. For the $\beta$ ensemble, based on the rigidity of the eigenvalues, Bourgade,  Erd\H{o}s and Yau proved the local universality for $\beta$ ensemble in the one-cut regime both in the bulk of the spectrum (see \cite{BEY_duke,BEY_bulk}) and  at the edges of the spectrum (see \cite{BEY_edge}). Later, by the method of approximate transport maps, Bekerman, Figalli and Guionnet \cite{BFG} proved the local universality for $\beta$ ensemble in the one-cut regime, both in the bulk and at the edges. Also with the method of approximate transport maps, Bekerman \cite{Bekerman} proved the local university for $\beta$ ensemble in the multi-cut regime, both in the bulk and at the edges. By a change of variable method, M. Shcherbina \cite{Shcherbina_change_of_variables} proved the local universality for $\beta$ ensemble in the bulk, in both the one-cut regime and the multi-cut regime. However, the work of Bekerman, Figalli, Guionnet and Shcherbina do not need to use the rigidity of eigenvalues. Other works about universality for $\beta$ ensemble with the classical values $\beta\in\{1,2,4\}$ include \cite{DG1,DG2,PS,Shcherbina}.

	 Related topics on  $\beta$ ensembles include the fluctuation of linear statistics of eigenvalues (see \cite{Johansson} and Lemma 6.5 of \cite{BEY_edge} for the one-cut case; see  \cite{BG_multi_cut,Shcherbina_fluctuation} and Theorem \ref{thm:fluctuation_multi_cut} for the multi-cut case) and the asymptotic expansion of the correlators and partition function (see \cite{BG_one_cut,BG_multi_cut} and the reference therein). For GUE, Gustavsson \cite{Gustavsson} proved the central limit theorem for \eqref{eq:282} both in the bulk and near the edges of the spectrum.   Gustavsson's results were extended  to GOE and GSE by O'Rourke \cite{ORourke} and to Wigner matrix by Bourgade and Mody \cite{Bourgade+Mody} and Landon and Sosoe \cite{Landon+Sosoe}.
	 
	 The phenomenon of the rigidity of particles is observed in many other statistical models. It was first proved for the eigenvalues of Wigner matrices \cite{Erdos+Schlein+Yau,Erdos+Schlein+Yau2,Erdos+Yau+Yin,Erdos+Yau+Yin2,Tao+Vu}. Later it is  obtained  for the sparse random matrices \cite{Erdos+Knowles+Yau+Yin}, the deformed Wigner ensembles \cite{Landon+Yau,Lee+Schnelli+Stetler+Yau,Lee+Li}, the Dyson Brownian motion \cite{Huang+Landon}, some random graph models \cite{ADK,BHKY,BBK} and the discrete $\beta$ ensembles \cite{Guionnet+Huang}. In particular,  the discrete $\beta$ ensemble is an analogue of $\beta$ ensemble on  the space of $N$-tuples. Recently, significant results on the optimal rigidity of eigenvalues for the circular $\beta$-ensemble are obtained. See \cite{Arguin+Belius+Bourgade,Chhaibi+Madaule+Najnudel,Lambert2}.

	 \subsection{Main result}
	 
	 Let $\mu$ be a probability measure on
	 $$\Sigma^{(N)}=\{(x_1,\ldots,x_N)\in\mathbb R^N|x_1\le\cdots\le x_N\}$$
	 with density function
	 $$\frac{1}{Z(\mu)}e^{-\frac{N\beta}{2}\sum_{i=1}^NV(x_i)}\prod_{i<j}|x_i-x_j|^\beta$$
	 where $Z(\mu)$ is the normalization constant.
	 
	 The following lemma is well known. It first appeared (in a slightly different form) as Theorem 2.1 in \cite{Johansson}. See also  Theorem 1.1 of \cite{BG_one_cut}, Theorem
	 1.1 of \cite{BG_multi_cut}, Page 5 of \cite{Pastur} and Remark 1.9
	 of \cite{FGSW}. Related results on the equilibrium measure can be found in Chapter 6 of \cite{Deift}.

	 \begin{lemma}\label{lemma:minimizer}
	 	If $V$ is continuous and $\liminf\limits_{|x|\to\infty}\dfrac{V(x)}{\ln|x|}>2$, then:
	 	\begin{enumerate}
	 		\item $Z(\mu)<+\infty$ for large enough $N$; 
	 		\item The empirical measure
	 		$\frac{1}{N}\sum_{i=1}^N\delta_{x_i}$ converges almost surely and in expectation to a compactly supported equilibrium measure $\mu_{eq}$;
	 		\item The equilibrium
	 		measure $\mu_{eq}$ is the unique minimizer (in the set of
	 		probability measures on $\mathbb{R}$) of the functional:
	 		\begin{align*}
	 			\nu\mapsto\int V(x)d\nu(x)-\iint\ln|x-y|d\nu(x)d\nu(y);
	 		\end{align*}
	 		\item The equilibrium
	 		measure $\mu_{eq}$ satisfies:
	 		\begin{align}\label{eq:2255}
	 			V(x)-2\int_{\mathbb{R}}\ln|x-y|d\mu_{eq}(y)=\min\limits_{t\in\mathbb{R}}\Big[V(t)-2\int_{\mathbb{R}}\ln|t-y|d\mu_{eq}(y)\Big]\quad\text{for  $\mu_{eq}$-almost every $x$.}
	 		\end{align}
	 		On the other hand, if  $\nu$ is a  probability measure  such that \eqref{eq:2255} is satisfied with $\mu_{eq}$ replaced by $\nu$, then $\nu=\mu_{eq}$. 
	 		\item  If $V$ is real analytic, then
	 		\begin{itemize}
	 			\item $\mu_{eq}$ has a density $\rho(x)$ which is supported on the union of a finite number of disjoint intervals: $\text{supp}(\rho)=\cup_{j=1}^q[A_j,B_j]\quad(B_i<A_{i+1})$,
	 			\item $\rho(x)=r(x)\prod\limits_{i=1}^q\sqrt{x-A_i}\sqrt{x-B_i}$ on $\cup_{j=1}^q[A_j,B_j]$ and $r(x)$ is  analytic.
	 		\end{itemize}
	 		\item Suppose $V(x)-2\int_{\mathbb{R}}\ln|x-y|d\mu_{eq}(y)>\min\limits_{t\in\mathbb{R}}\Big[V(t)-2\int_{\mathbb{R}}\ln|t-y|d\mu_{eq}(y)\Big]$ for all $x\not\in\text{supp}(\mu_{eq})$. Let $A\subset\mathbb{R}$ be an open set containing 
	 		the support of $\mu_{eq}$. There exists $C>0$  such that for large enough $N$,
	 		\begin{align}\label{eq:2207}
	 			\P^{\mu}(\exists k\in[1,N]\text{ such that }x_k\not\in A)<\exp(-CN).
	 		\end{align}
	 		
	 	\end{enumerate}
	 \end{lemma}
	 \begin{remark}
	 	In many literature the $\beta$ ensemble is defined to be the probability measure on $I^N$:
	 	$$\mu'=\frac{1}{Z(\mu')}e^{-\frac{N\beta}{2}\sum_{i=1}^NV(x_i)}\prod_{i<j}|x_i-x_j|^\beta dx_1\cdots dx_N$$
	 	where $I$ is a finite or infinite interval and $Z(\mu')$ is the normalization constant. In this paper, to study the rigidity, we consider its variant $\mu''$ which is a probability measure defined on 
	 	$$\{(x_1,\ldots,x_N)\in I^N|x_1\le\cdots\le x_N\}$$
	 	with the form:
	 	$$\mu''=\frac{1}{ Z( \mu'')}e^{-\frac{N\beta}{2}\sum_{i=1}^NV(x_i)}\prod\limits_{i<j}|x_i-x_j|^\beta dx_1\cdots dx_N$$
	 	where $Z(\mu'')$ is the normalization constant. It is easy to see that $Z(\mu')=N!\cdot Z(\mu'')$ and
	 	\begin{align}\label{eq:2201}
	 		\mathbb E^{\mu'}[f(x_1,\ldots,x_N)]=\mathbb E^{\mu''}[f(x_1,\ldots,x_N)]
	 	\end{align}
	 	as long as $f(x_1,\ldots,x_N)$ is a symmetric function. For each result we cite, it may be proved for $\mu'$ in the original paper, but because of \eqref{eq:2201} it also holds for $\mu''$.
	 \end{remark}
	 
	 The following lemma is  Theorem 1.1 of \cite{BG_one_cut}.
	 \begin{lemma}\label{coro:convergence_for_restriction}
	 	Suppose $V$ and $\mu_{eq}$ are defined as in Lemma \ref{lemma:minimizer}. Suppose $I$ is a (either finite or infinite) closed interval with $\text{supp}(\mu_{eq})\subset \mathring I$. Suppose $\{V_N(x)\}$ is a sequence of continuous functions on $I$ such that $\sup\limits_{x\in I}|V_N(x)-V(x)|\to0$ as $N\to\infty$. Let $\bar\mu$ be a probability measure on
	 	$$\{(x_1,\ldots,x_N)\in I^N|x_1\le\cdots\le x_N\}$$
	 	with the form:
	 	$$\bar{\mu}=\frac{1}{ Z(\bar \mu)}\exp\Big(-\frac{N\beta}{2}\sum_{i=1}^NV_N(x_i)\Big)\prod\limits_{i<j}|x_i-x_j|^\beta\prod\limits_{i=1}^N\mathds{1}_I(x_i)dx_1\cdots dx_N$$
	 	where $Z(\bar{\mu})$ is the normalization constant. Then the empirical measure 	$\frac{1}{N}\sum_{i=1}^N\delta_{x_i}$ of $\bar{\mu}$ converges almost surely and in expectation to $\mu_{eq}$. In other words, the empirical measures of $\bar{\mu}$ and $\mu$ have the same limit.
	 \end{lemma}
	 
	 Throughout this paper we assume that the following Hypothesis are true.

	 \begin{hypothesis}\label{Hypothesis for initial model}
	 	\begin{enumerate}
	 		\item $V$ is real analytic. 
	 		\item $\liminf\limits_{|x|\to+\infty}\frac{ V(x)}{\ln|x|}>2$.
	 		\item $\inf\limits_{x\in\mathbb{R}}V''(x)>-\infty$.
	 		\item The equilibrium measure $\rho(x)dx$ has density
	 		\begin{align}\label{eq:239}
	 			\rho(x)=r(x)\Big(\prod\limits_{j=1}^q\sqrt{x-A_j}\sqrt{x-B_j}\Big)\mathds{1}_{\cup_{j=1}^q[A_j,B_j]}(x)\quad(B_i<A_{i+1})
	 		\end{align}
	 		and $r(x)$ does not vanish on $[A_i,B_i]$ for $1\le i\le q$. Here and throughout this paper we follow the convention that $\sqrt{-s}:={\rm i}\sqrt{|s|}$ for $s\in(-\infty,0)$.
	 		\item The function $x\mapsto
	 		V(x)-2\int_{\mathbb{R}}\ln|x-y|\rho(y)dy$ achieves its minimum only on the support of $\rho$:
	 		$$\sigma:=[A_1,B_1]\cup\ldots\cup[A_q,B_q].$$
	 	\end{enumerate}
	 \end{hypothesis}

	 \begin{remark}\label{remark: model and condition 4 are well defined}
	 	According to Lemma \ref{lemma:minimizer}, the model is well defined
	 	and Condition 4 in Hypothesis \ref{Hypothesis for initial model} makes sense.
	 \end{remark}
	 \begin{remark}
	 	We made the assumption that $\inf\limits_{x\in\mathbb{R}}V''(x)>-\infty$ in order to cite the theorem on the rigidity of $\beta$ ensemble in the one-cut regime proved by Bourgade, Erd\H{o}s and Yau. See Theorem \ref{thm:rigidity_for_mu_2}. They use this assumption to convexify the Hamiltonian $\frac{1}{2}\sum_i V(x_i)-\frac{1}{N}\sum_{i<j}\ln|x_i-x_j|$. See Section 3 of \cite{BEY_bulk}. We made the assumption that $V$ is analytic in order to cite M. Shcherbina's and Borot and Guionnet's theorems on $\E^\mu[e^{\text{linear statistics}}]$. See Theorem \ref{thm:Shcherbina_fluctuation_one_cut}, Theorem \ref{thm:fluctuation} and \eqref{eq:2233}. The other assumptions in Hypothesis \ref{Hypothesis for initial model} are generic.
	 \end{remark}

	 For $1\le k\le N$, define the classical location of the $k$th largest
	 particle $\eta_k=\eta^{(N)}_k$ by
	 \begin{align}\label{eq:268}
	 	\eta_k:=\inf\Big\{x\in\mathbb{R}\Big|\int_{-\infty}^x\rho(t)dt=\frac{k}{N}\Big\}.
	 \end{align}

	 For $1\le i\le q$,
	 define $R_i$ to be the area of the region under the curve of
	 $\rho(x)$ over $[A_i,B_i]$:
	 \begin{align}\label{eq:2206}
	 	R_i:=\int_{A_i}^{B_i}\rho(x)dx.
	 \end{align}
	 Obviously $\sum\limits_{i=1}^qR_i=1$. We make the convention that $R_0=0$. Our main results is Theorem
	 \ref{thm:main_thm}.

	 \begin{theorem}\label{thm:main_thm}
	 	Let $\alpha\in(0,\min_i\frac{R_i}{3})$ be an arbitrarily small constant. Suppose $\epsilon>0$ and $1\le i_0\le q$. There exists 
	 	$C>0$ such that
	 	if $N$ is large enough then
	 	$$
	 	\P^{\mu}(\exists k\in[(R_1+\cdots+R_{i_0-1}+\alpha)N,(R_1+\cdots+R_{i_0}-\alpha)N]\text{ such that }|x_k-\eta_k|>N^{-1+\epsilon})<\exp(-N^C),$$
	 	$$\mathbb P^{\mu}(\exists k\in[1,\alpha N]\cup[(1-\alpha)N,N]\text{ such that }|x_k-\eta_k|>N^{-\frac{2}{3}+\epsilon}\cdot\hat k^{-1/3})<\exp(-N^C).$$
	 	where $\hat k=\min(k,N+1-k)$.
	 \end{theorem}
	 
	 \begin{remark}
	 	The first conclusion gives the rigidity for particles in the bulk and the second conclusion gives the rigidity for particles near the extreme edges, i.e., the leftmost edge $A_1$ and the rightmost edge $B_q$. Theorem \ref{thm:main_thm} has been proved in the one-cut case (i.e., the case that $q=1$) by Bourgade,  Erd\H{o}s and Yau. (See Theorem 1.1 of \cite{BEY_bulk} or Theorem 2.4 of \cite{BEY_edge}.)
	 \end{remark}
	 \subsection{Strategy of the proof}
	 The inner structure of $\beta$ ensembles in the multi-cut regime and those in the one-cut regime are essentially different. The method Bourgade,  Erd\H{o}s and Yau used in \cite{BEY_duke,BEY_bulk,BEY_edge} to prove the rigidity for $\beta$ ensemble in the one-cut regime does not directly apply in the multi-cut regime for at least the following two reasons. 
	 
	 First, the starting point of the proof in \cite{BEY_duke,BEY_bulk,BEY_edge} is an $initial$ $estimation$ which states that if Hypothesis \ref{Hypothesis for initial model} is true and $q=1$, then for any $\epsilon>0$ there exists $C>0$ such that for large enough $N$ we have
	 \begin{align}\label{eq:283}
	 	\P^{\mu}(\exists k\in[1,N]\text{ such that }|x_k-\eta_k|>\epsilon)<\exp(-N^C).
	 \end{align}
	 But in the multi-cut regime, the particles near adjacent edges can jump with positive probability; see Section \ref{background}. So \eqref{eq:283} is no longer true.
	 
	 Second, in the multi-cut case, one cannot use the  loop equation to obtain a good estimation for 
	 $$|m_{N,h,c}(z)-m(z)|$$
	 as Lemma 6.6 of \cite{BEY_edge}. Here $m(z)$ denotes the Stieltjes transform of the empirical measure $\rho(t)dt$ and $m_{N,h,c}(z)$   denotes the Stieltjes transform of the empirical measure of the $\beta$ ensemble with density
	 $$\frac{1}{Z(h,c)}\exp\Big(-\frac{N\beta}{2}\sum_{i=1}^N V(x_i)+\beta\sum_{i=1}^N h(x_i)\Big)\cdot \prod_{i=1}^N\mathds1_{\cup_{j=1}^q[A_j-c,B_j+c]}(x_i)$$
	 where $Z(h,c)$ is the normalization constant, $c>0$ is a small constant and $h$ is a bounded continuous function. More explicitly, in the multi-cut case, the left hand side of (6.22) of \cite{BEY_edge} is the contour integral of
	 \begin{align}\label{eq:2216}
	 	\frac{(m_{N,h,c}(\xi)-m(\xi))\prod_{i=1}^q\sqrt{(\xi-A_i)(\xi-B_i)}}{z-\xi}
	 \end{align}
	 which is of order $|\xi|^{q-3}$ as $|\xi|\to\infty$. Therefore, in the multi-cut case, the left hand side of (6.22) of \cite{BEY_edge} is not necessarily the residue of \eqref{eq:2216} at $z$.  If we divide the integrands on both sides of (6.22) of \cite{BEY_edge} by $\prod_{i=1}^q\sqrt{(\xi-A_i)(\xi-B_i)}$, then the error terms in (6.23) of \cite{BEY_edge} will be out of control when $\text{Re}z\in\{A_i,B_i|1\le i\le q\}$. 
	 
	 The first difficulty mentioned above is essential, however, the second difficulty is  technical.
	 
	 We will follow the following steps  to prove the rigidity in the multi-cut regime.
	 \begin{enumerate}
	 	\item First, consider three $\beta$ ensembles all in the $one$-$cut$ $regime$. Let
	 	\begin{itemize}
	 		\item $\mu_1$ be a probability measure on $\{(x_1,\ldots,x_N)\in[a,b]^N|x_1\le\cdots\le x_N\}$ with density function:
	 		\begin{align}\label{eq:281}
	 			\frac{1}{Z(\mu_1)}e^{-\frac{N\beta}{2}\sum V_N(x_i)}\prod\limits_{i<j}|x_i-x_j|^\beta\prod_i\mathds1_{[a,b]}(x_i), 
	 		\end{align}
	 		\item $\mu_2$ and $\mu_3$ be   probability measures on $\Sigma^{(N)}$ with density functions:
	 		$$\frac{e^{-\frac{N\beta}{2}\sum V(x_i)}}{Z(\mu_2)}\prod\limits_{i<j}|x_i-x_j|^\beta\quad\text{and}\quad \frac{e^{-\frac{N\beta}{2}\sum V_N(x_i)}}{Z(\mu_3)}\prod\limits_{i<j}|x_i-x_j|^\beta$$
	 		respectively.
	 	\end{itemize}	
	 	Here $Z(\mu_1)$, $Z(\mu_2)$ and $Z(\mu_3)$ are the normalization constants. Here $\mu_2$ is the standard $\beta$ ensemble in the one-cut regime for which the bulk and edge rigidity  was proved in \cite{BEY_duke,BEY_bulk,BEY_edge}. The measure $\mu_3$ is constructed from $\mu_2$ by replacing $V$ by an $N$-depending potential $V_N$. The measure $\mu_1$ is constructed from $\mu_3$ by restricting all particles on in $[a,b]$ which is a neighbourhood of the support of the equilibrium measure. We will use the large deviation estimation \eqref{eq:2207} to show that $\mu_1$ is close to $\mu_3$ and use M. Shcherbina's expanssion of $\E[e^{\sum_i f(x_i)-N\int f(t)\rho(t)dt}]$ (i.e., Theorem \ref{thm:Shcherbina_fluctuation_one_cut}) to show that $\mu_3$ is close to $\mu_2$. So the rigidity of $\mu_2$ induces the rigidity of $\mu_1$. 
	 	\item In the $multi$-$cut$ $regime$, instead of the initial model \eqref{density of the beta ensemble}, we consider the measure $\mu_\kappa$ on $\{(x_1,\ldots,x_N)\in\sigma(\kappa)^N|x_a\le\cdots\le x_N\}$ with density
	 	$$\frac{1}{Z(\mu_\kappa)}\exp\Big(-\frac{N\beta}{2}\sum_{i=1}^NV(x_i)\Big)\prod\limits_{i<j}|x_i-x_j|^\beta\prod_i\mathds{1}_{\sigma(\kappa)}(x_i)$$
	 	where $Z(\mu_\kappa)$ is the normalization constant.	The measure $\mu_\kappa$ restricts all particles on $\sigma(\kappa)$ which is a neighbourhood of the support of the equilibrium measure. Because of the large deviation estimation \eqref{eq:2207}, $\mu_\kappa$ is close to $\mu$ and the rigidity of $\mu$ is induced from the rigidity of $\mu_\kappa$, both in the bulk and near the extreme edges.
	 	\item Construct a probability measure $\mu_r$  from $\mu_\kappa$ by changing the density function in the following ways:
	 	\begin{itemize}
	 		\item removing all terms in $\prod_{i<j}|x_i-x_j|^\beta$ which involve particles located in different cuts;
	 		\item replacing $V$ by a new potential $V^{(r)}$ to compensate the error caused by the removal.
	 	\end{itemize}
	 	The measure $\mu_r$ was introduced by M. Shcherbina \cite{Shcherbina}. We will prove in Proposition \ref{prop:small_for_mu_r_implies_small_for_mu_kappa} that $\mu_r$ is still close to $\mu_\kappa$ in the sense that if an event is exponentially small with respect to $\mu_r$ then it is also exponentially small with respect to $\mu_\kappa$. Therefore the rigidity of $\mu_r$ implies the rigidity of $\mu_\kappa$, both in the bulk and near the extreme edges.
	 	\item Since the intersection among particles in different cuts are removed, we prove that $\mu_r$ can basically be decomposed as a product of probability measures on spaces with lower dimensions. Moreover, each of these measures has the same form as $\mu_1$ (see \eqref{eq:281}), so the rigidity of $\mu_1$ implies the rigidity of $\mu_r$.
	 \end{enumerate}
	 We believe that using the same strategy one can generalize the existing results about the rigidity and universality for $\beta$ ensemble  in the following ways (as in \cite{BG_multi_cut} and \cite{BEY_edge}):
	 \begin{itemize}
	 	\item $V$ is $C^4$ instead of real analytic;
	 	\item $V$ depends on $N$ and converges to a limit $V^{\{0\}}$ uniformly;
	 	\item  $V$ is defined only on a neighborhood of $\cup_{i=1}^q[A_i,B_i]$ instead of  on all of $\mathbb{R}$.
	 \end{itemize}
	 
	 \subsection{Some potential applications of rigidity of eigenvalues}

	 \subsubsection{Mesoscopic universality for $\beta$ ensemble in the multi-cut regime}
	 
	 The $mesoscopic$ $universality$ states that, for some constant $a$ and $E$, the fluctuation of 
	 $$\sum f(N^a (x_i-E))$$
	 converges to a Gaussian distribution, when the test function $f(x)$ is regular enough. Here $a\in(0,1)$ if $E$ is in the bulk of the spectrum and $a\in(0,2/3)$ if $E$ is at the edges of the spectrum.  The mesoscopic universality can be interpreted as an intermediate phenomena between the global universality and the local universality.
	 
	 For Wigner matrix, the study of mesoscopic universality
	 was initiated by Boutet de Monvel and Khorunzhy \cite{BeoutetdeMonvel+Khorunzhy,BeoutetdeMonvel+Khorunzhy2}. In the bulk of the spectrum, He and Knowles \cite{He+Knowles} obtained the mesoscopic universality on the optimal scales. At the edges of the spectrum, the mesoscopic universality was studied by Basor and Widom \cite{Basor+Widom}  for GUE and by Min and Chen \cite{Min+Chen} for GOE and finally by Schnelli, Xu and the author \cite{Li+Schnelli+Xu} for the general case on the optimal scales.

	 For $\beta$ ensemble in the one-cut regime, Bekerman and Lodhia \cite{Bekerman+Lodhia} used rigidity to obtain the mesoscopic universality. In the multi-cut regime, for $\beta=2$, Lambert \cite{Lambert3} used the theory of determinantal point process to prove the mesoscopic universality. We remark that the results  \cite{Bekerman+Lodhia} and \cite{Lambert3} are both in the bulk of the spectrum. With the rigidity result proved in this paper, one may try to prove the mesoscopic universality for $\beta$ ensemble in the multi-cut regime for general $\beta>0$, both in the bulk and near the extreme edges.

	 \subsubsection{ Spherical Sherrington-Kirkpatrick model for $\beta$ ensemble in the multi-cut regime}
	 For a statistical model involving particles  $x_1\ge\cdots\ge x_N$, the free energy of its
	 spherical Sherrington–Kirkpatrick model (with 2-spin interaction and no magnetic
	 field) at inverse temperature $\beta_0>0$ is given by 
	 $$F_N=\frac{1}{N}\log\Bigg(\frac{\Gamma(N/2)}{2\pi{\rm i}(N\beta_0)^{\frac{N}{2}-1}}\int_{a_0-{\rm i}\infty}^{a_0+{\rm i}\infty}\exp\Big(N\beta_0 z-\frac{1}{2}\sum_{i=1}^N\log(z-x_i)\Big)dz\Bigg)$$
	 where 
	 \begin{itemize}
	 	\item $a_0$ is an arbitrary constant satisfying $a_0>x_1$;
	 	\item we take the analytic branch of the $\log$ function in the integral such that $\text{Im}\log(z-x_i)\in(-\pi,\pi)$ for all $z$ on the integration contour;
	 	\item the value of the integral can be proved to be in ${\rm i}\mathbb R$, so the outer $\log$ is the usual $\log$ function for real numbers.
	 \end{itemize}
	 See (1.2) and Lemma 1.3 of \cite{Baik+Lee}. For the Wigner matrix, the sample covariance matrix and $\beta$ ensemble in the one-cut regime, Baik and Lee \cite{Baik+Lee} proved the following result. There exists $\beta_c>0$ such that if $\beta_0<\beta_c$, then the fluctuation of $F_N$ converges to a Gaussian distribution; if $\beta_0>\beta_c$, then  the fluctuation of $F_N$ converges to the Tracy-Widom distribution. The main tool they used for the case $\beta_0<\beta_c$ is the global universality and the main tools used for the case $\beta_0>\beta_c$ are the rigidity and the fact that $x_1$ converges to the Tracy-Widom distribution. One may consider the same problem for $\beta$ ensemble in the multi-cut regime. In the multi-cut regime, the main difficulty for the case $\beta_0<\beta_c$ is that  the global universality does not hold for general test function, as we mentioned in Section \ref{background}. For the case $\beta_0>\beta_c$, there are two main difficulties: (i)  rigidity does not hold near the edges $A_2$,\ldots, $A_q$, $B_1$,\ldots, $B_{q-1}$; (ii) the the limiting behavior of $x_1$ is not known in the multi-cut regime. Notice that $\beta$ and $\beta_0$ are two independent constants.

	 \subsection{Structure of this paper} 
	 In Section \ref{sec:one_cut} we consider $\beta$ ensembles in the one-cut regime. In Section \ref{some_models} we define three $\beta$ ensembles  $\mu_1$, $\mu_2$ and $\mu_3$ which are all in the one-cut regime. In Section \ref{sec:results_for_mu_2} we introduce some useful results for   $\mu_2$. In Section \ref{sec:rigidity_of_mu_1} we use the properties of $\mu_2$ to prove the rigidity of $\mu_1$.

	 In Section \ref{sec:decomposition of beta ensemble} we decompose the $\beta$ ensemble in the multi-cut regime as a product of $\beta$ ensembles in the one-cut regime. In Section \ref{sec:mu_kappa} we define the measure $\mu_\kappa$ which is constructed from $\mu$ by restricting all particles on a compact set. In Section \ref{sec:mu_r} we define $\mu_r$ which is constructed from $\mu_\kappa$ by removing the intersection among particles in different cuts and modifying the potential accordingly. In Section \ref{sec:the measure nu(i,N,cN)} we define the measure  $\nu^{(i,M,\xi)}$  which has the same form as $\mu_1$ and is also a factor of $\mu_r$. In other words we decompose $\mu_r$ as a product of some measures which have the same form as $\mu_1$.
	 
	 In Section \ref{sec:proof of main thm} we prove the main theorem.

	 In Section \ref{appendix:proof_of_fluctuation_for_initial_model} we prove a large deviation estimation for the fluctuation of linear statistics of eigenvalues.

	 \section{Some results for $\beta$ ensembles in the one-cut regime}\label{sec:one_cut} 
	 
	 \subsection{Some $\beta$ ensembles in the one-cut regime}\label{some_models}
	 
	 To prove the main result, we need some results for $\beta$ ensembles in the one-cut regime.
	 
	 \begin{definition}\label{def:V_0}
	 	Suppose $[c,d]\subset[a,b]$ with $c-a=b-d>0$. Suppose $D$ is a domain in $\mathbb{C}$ and $[a,b]\subset D$. Suppose $V_0(x):\mathbb{R}\to\mathbb{R}$ and $r_0(x):[c,d]\to(0,+\infty)$ satisfy the following conditions.
	 	\begin{itemize}
	 		\item $V_0\in C^\infty(\mathbb{R})$ and $\liminf\limits_{|x|\to+\infty}\frac{V_0(x)}{\ln|x|}>2$. Moreover, $V_0\big|_{D\cap\mathbb{R}}$ can be analytically extended to $D$.
	 		\item $\inf\limits_{x\in\mathbb{R}} V_0''(x)>-\infty$.
	 		\item $r_0(x)$ can be analytically extended to $D$. Moreover, $r_0(z)\ne0$ for $z\in D$. 
	 		\item The function $\rho_0(x):=r_0(x)\sqrt{(x-c)(d-x)}\mathds{1}_{[c,d]}(x)$ satisfies the condition that $\int_c^d\rho_0(x)dx=1$ and that
	 		\begin{align*}
	 			V_0(x)-2\int_c^d\rho_0(y)\ln|x-y|dy\begin{cases}
	 				=\min\limits_{x\in\mathbb{R}}\big(V_0(x)-2\int_c^d\rho_0(y)\ln|x-y|dy\big)\,&\text{if }x\in[c,d]\\>\min\limits_{x\in\mathbb{R}}\big(V_0(x)-2\int_c^d\rho_0(y)\ln|x-y|dy\big)\,&\text{if }x\in\mathbb R\backslash[c,d]
	 			\end{cases}
	 		\end{align*}
	 	\end{itemize}
	 \end{definition}
	 
	 \begin{definition}\label{def:measures} Suppose $\{c_N\}$ is a sequence of numbers such that $|c_N-1|<N^{-1+\epsilon_0}$ for some constant $\epsilon_0\in(0,0.01)$. Suppose $\tau>0$ is a constant  such that $[a-\tau,b+\tau]\subset D$. Let $\phi(x):\mathbb{R}\to[0,1]$ be a smooth function such that $\phi(x)=1$ if $x\in[a,b]$ and $\phi(x)=0$ if $x\not\in[a-\tau,b+\tau]$. Now let:
	 	
	 	\begin{itemize}
	 		\item  $\mu_1=\mu_1(N)$ be a probability measure on $\Sigma_{[a,b]}^{(N)}:=\{(x_1,\ldots,x_N)\in[a,b]^N|x_1\le\cdots\le x_N\}$ with density
	 		\begin{align*}
	 			\frac{1}{Z(\mu_1)}e^{-\frac{N\beta}{2}\sum_{i=1}^Nc_NV_0(x_i)}\prod\limits_{i<j}|x_i-x_j|^\beta\prod_{i=1}^N\mathds{1}_{[a,b]}(x_i)
	 		\end{align*}
	 		where $Z(\mu_1)$ is the normalization constant;
	 		\item  $\mu_2=\mu_2(N)$ be a probability measure on $\Sigma^{(N)}$ with density
	 		\begin{align*}
	 			\frac{1}{Z(\mu_2)}e^{-\frac{N\beta}{2}\sum_{i=1}^NV_0(x_i)}\prod\limits_{i<j}|x_i-x_j|^\beta
	 		\end{align*}
	 		where $Z(\mu_2)$ is the normalization constant;
	 		\item  $\mu_3=\mu_3(N)$ be a probability measure on $\Sigma^{(N)}$ with density
	 		\begin{align*}
	 			\frac{1}{Z(\mu_3)}e^{-\frac{N\beta}{2}\sum_{i=1}^NV_N(x_i)}\prod\limits_{i<j}|x_i-x_j|^\beta
	 		\end{align*}
	 		where $V_N(x)=V_0(x)(1+(c_N-1)\phi(x))$ and $Z(\mu_3)$ is the normalization constant. 
	 	\end{itemize}
	 \end{definition}
	 
	 \begin{lemma}\label{lemma:limiting_measure}
	 	Suppose $f(x)$ is a continuous and bounded function on $\mathbb{R}$. Then $\frac{1}{N}\E^{\mu_1}[\sum_{i=1}^Nf(x_i)]$, $\frac{1}{N}\E^{\mu_2}[\sum_{i=1}^Nf(x_i)]$ and $\frac{1}{N}\E^{\mu_3}[\sum_{i=1}^Nf(x_i)]$ all converge to $\int f(x)\rho_0(x)dx$ as $N\to\infty$.  In other words, the empirical measures of $\mu_1$, $\mu_2$ and $\mu_3$ have the same limit.
	 \end{lemma}
	 \begin{proof}
	 	By  Lemma \ref{lemma:minimizer} we have the convergence of $\frac{1}{N}\E^{\mu_2}[\sum_{i=1}^Nf(x_i)]$.  By Lemma \ref{coro:convergence_for_restriction} we have the convergence of $\frac{1}{N}\E^{\mu_1}[\sum_{i=1}^Nf(x_i)]$ and $\frac{1}{N}\E^{\mu_3}[\sum_{i=1}^Nf(x_i)]$.
	 \end{proof}

	 \begin{definition}
	 	For $1\le k\le N$, define the $k$th classical location $\eta_k^0=\eta_k^0(N)$ by
	 	\begin{align}\label{eq:251}
	 		\eta_k^0=\inf\Big\{x\in\mathbb{R}\Big|\int_{-\infty}^x\rho_0(x)dx=\frac{k}{N}\Big\}.
	 	\end{align}
	 \end{definition}
	 
	 \subsection{Some results for $\mu_2$}\label{sec:results_for_mu_2}
	 
	 The next theorem on $\beta$ ensemble in the one-cut regime was proved by Bourgade,  Erd\H{o}s and Yau. (See Theorem 2.4 of \cite{BEY_edge}.)
	 \begin{theorem}\label{thm:rigidity_for_mu_2}
	 	For any  $\epsilon>0$, there exist 
	 	$C>0$  such that
	 	\begin{align*}
	 		\mathbb P^{\mu_2}(\exists k\in[1,N]\text{ such that }|x_k-\eta_k^0|>N^{-\frac{2}{3}+\epsilon}\cdot\hat k^{-1/3})<\exp(-N^C)
	 	\end{align*}
	 	for large enough $N$. Here   $\hat k=\min(k,N+1-k)$, as defined in Theorem \ref{thm:main_thm}.
	 \end{theorem}
	 
	 The next theorem was proved by M. Shcherbina. (See Theorem 1 of \cite{Shcherbina_fluctuation}.) 
	 \begin{theorem}\label{thm:Shcherbina_fluctuation_one_cut}
	 	Suppose $\{h_N:\mathbb{R}\to\mathbb{R}|N\in\mathbb{N}\}$ is a sequence of smooth functions all supported on $[a-\tau,b+\tau]$ such that $\max(\|h_N'\|_\infty,\|h_N^{(6)}\|_\infty)\le\sqrt N\ln N$. Then for $N\ge1$,
	 	\begin{align}\label{eq:231}
	 		\mathbb{E}^{\mu_2}\Big[e^{\frac{\beta}{2}\sum_{i=1}^N h_N(x_i)-\frac{\beta}{2}N\int\rho_0(t)h_N(t)dt}\Big]
	 		=\exp\Bigg[\sigma^2(h_N)+2\int_c^dh_N(x)d\nu(x)+\Phi(h_N)\Bigg]
	 	\end{align}
	 	where 
	 	\begin{itemize}
	 		\item 
	 		\begin{align}\label{eq:2215}
	 			\sigma^2(h_N)=\frac{\beta}{8\pi^2}\int_c^d\frac{h_N(x)}{\sqrt{(x-c)(d-x)}}\text{P.V.}\int_c^d\frac{h_N'(y)\sqrt{(y-c)(d-y)}}{x-y}dydx
	 		\end{align}
	 		
	 		\item $\nu$ is a finite signed measure on $[c,d]$ and $\nu$ is determined by $V_0$;
	 		\item $|\Phi(h_N)|\le\frac{C}{N}(\|h_N'\|_\infty^3+\|h_N^{(6)}\|_\infty^3)$ where $C>0$ is a constant.
	 	\end{itemize}
	 \end{theorem}
	 \begin{remark}
	 	According to (4.16) of \cite{Lambert}, 
	 	\begin{align}\label{eq:2213}
	 		|\sigma^2(h_N)|\le C_1\cdot\|h_N'\|_\infty^2.
	 	\end{align} 
	 	Here $C_1>0$ is a constant. Notice that (4.16) of \cite{Lambert} requires the support of the equilibrium measure to be $[-1,1]$, but this can be satisfied by a linear translation: $V_0(x)\mapsto V_0(\frac{d-c}{2}x+\frac{d^2-c^2}{4})$. One can also prove \eqref{eq:2213} by direct computation.
	 \end{remark}
	 \begin{remark}
	 	\begin{enumerate}
	 		\item Instead of  Johansson's loop equation method (see \cite{Johansson}), the main idea used in  \cite{Shcherbina_fluctuation}  to prove Theorem \ref{thm:Shcherbina_fluctuation_one_cut} is to control the quantity
	 		$$N\int_{\mathbb R}(\rho_{N,h}(t)-\rho_0(t))\varphi(t)dt$$
	 		where 
	 		\begin{itemize}
	 			\item $\rho_{N,h}(t)$ is the one-point correlation function of the $\beta$ ensemble with potential  $V_0(x)-\frac{1}{N}h_N(x)$;
	 			\item  $\varphi(t)$ is an arbitrary function with bounded sixth derivatives.
	 		\end{itemize}
	 		\item  Theorem \ref{thm:Shcherbina_fluctuation_one_cut} does not show that the error term $\Phi(h_N)$ is small compared to the first two terms in the exponent of \eqref{eq:231}. For example, if $\|h_N^{(6)}\|_\infty$ is close to $\sqrt N\ln N$ and $\|h_N'\|_\infty\approx\|h_N\|_\infty\approx 0$, then $\frac{C}{N}(\|h_N'\|_\infty^3+\|h_N^{(6)}\|_\infty^3)$, i.e.,  the bound of $\Phi(h_N)$ given by Theorem \ref{thm:Shcherbina_fluctuation_one_cut}, will be much larger than the other two terms in the exponent of \eqref{eq:231}. However, when we apply Theorem \ref{thm:Shcherbina_fluctuation_one_cut} in this paper, we actually have $\|h_N^{(i)}\|_\infty\le C_1N^{\epsilon_0}$ ($i=0,\ldots,6$) where $C_1>0$ and $\epsilon_0\in(0,0.01)$ are constants. Since $\epsilon_0$ is small,   $\Phi(h_N)$  is really much smaller than the natural bounds of the other terms in the exponent of \eqref{eq:231}. See the proof of Lemma \ref{lemma:mu_1mu_2mu_3}.
	 	\end{enumerate}
	 \end{remark}

	 \subsection{Rigidity for $\mu_1$}\label{sec:rigidity_of_mu_1}
	 
	 In this section we prove the rigidity of particles with respect to $\mu_1$.
	 
	 \begin{lemma}\label{lemma:trivial}
	 	Suppose $m_1$ and $m_2$ are probability measures defined on a same probability space such that
	 	$$m_1=\frac{f\cdot m_2}{\E^{m_2}[f]}$$
	 	where $f$ is a measurable function integrable with respect to $m_2$. Then for any event $A$ we have
	 	$$\P^{m_1}(A)=\frac{\E^{m_2}[f\cdot\mathds1_A]}{\E^{m_2}[f]}$$
	 \end{lemma}
	 \begin{proof}
	 	This lemma is trivial.
	 \end{proof}
	 
	 \begin{lemma}\label{lemma:mu_1mu_2mu_3}
	 	Suppose $A_N$ is a Borel subset of $\Sigma^{(N)}$. There exist constants $C>0$ and $N_0>0$ such that if $N>N_0$ then
	 	\begin{align*}
	 		\P^{\mu_3}(A_N)\le\sqrt{\P^{\mu_2}(A_N)}\exp(CN^{2\epsilon_0})\quad\text{and}\quad \P^{\mu_1}(A_N)\le2\P^{\mu_3}(A_N)
	 	\end{align*}
	 	where $\Sigma_{[a,b]}^{(N)}=\{(x_1,\ldots,x_N)\in[a,b]^N|x_1\le\cdots\le x_N\}$, as defined in Definition \ref{def:measures}.
	 \end{lemma}
	 \begin{proof}
	 	Let $$h_N(x):=N(V_N(x)-V_0(x))=NV_0(x)(c_N-1)\phi(x)\in C^\infty(\mathbb{R}).$$
	 	By the definition of $c_N$ and $h_N$, there exist constants $N_0>0$ and $C_2>0$ such that if $N>N_0$ then 
	 	\begin{align}\label{eq:2214}
	 		\|h_N'\|_\infty+\|h_N^{(6)}\|_\infty<C_2N^{\epsilon_0}< \frac{1}{2}\sqrt N\ln N
	 	\end{align}
	 	and thus by Lemma \ref{lemma:trivial} and \eqref{eq:231} we have:
	 	\begin{align}\label{eq:259}
	 		&\P^{\mu_3}(A_N)
	 		=\dfrac{\E^{\mu_2}\big[e^{-\frac{\beta}{2}\sum h_N(x_i)+\frac{N\beta}{2}\int h_N(t)\rho_0(t)dt}\cdot\mathds{1}_{A_N}(x)\big]}{\E^{\mu_2}\big[e^{-\frac{\beta}{2}\sum h_N(x_i)+\frac{N\beta}{2}\int h_N(t)\rho_0(t)dt}\big]}\nonumber\\
	 		\le&\dfrac{\sqrt{\P^{\mu_2}(A_N)}\sqrt{\mathbb E^{\mu_2}\big[e^{-\beta\sum h_N(x_i)+N\beta\int h_N(t)\rho_0(t)dt}\big]}}{\E^{\mu_2}\big[e^{-\frac{\beta}{2}\sum h_N(x_i)+\frac{N\beta}{2}\int h_N(t)\rho_0(t)dt}\big]}\quad\text{(by Cauchy-Schwarz inequality)}\nonumber\\
	 		=&\dfrac{\sqrt{\P^{\mu_2}(A_N)}\sqrt{\exp\Big(\sigma^2(-2h_N)-4\int_c^dh_N(x)d\nu(x)+\Phi(-2h_N)\Big)}}{\exp\Big(\sigma^2(-h_N)-2\int_c^dh_N(x)d\nu(x)+\Phi(-h_N)\Big)}\nonumber\\
	 		=&\sqrt{\P^{\mu_2}(A_N)}\exp\Big(\sigma^2(h_N)+\frac{\Phi(-2h_N)}{2}-\Phi(-h_N)\Big)\quad\text{(since $\sigma^2(-2h_N)=4\sigma^2(h_N)=4\sigma^2(-h_N)$)}\nonumber\\
	 		\le&\sqrt{\P^{\mu_2}(A_N)}\exp\Big( C_0N^{2\epsilon_0}+\frac{\Phi(-2h_N)}{2}-\Phi(-h_N)\Big)\quad\text{(by \eqref{eq:2213} and \eqref{eq:2214})}\nonumber\\
	 		\le&\sqrt{\P^{\mu_2}(A_N)}\exp(C_0N^{2\epsilon_0}+2C_1)
	 	\end{align}
	 	where
	 	\begin{itemize}
	 		\item $C_0>0$ is a constant;
	 		\item $\max(|\Phi(-2h_N)|,|\Phi(-h_N)|)\le C_3N^{-1+3\epsilon_0}$ for some constant $C_3>0$.
	 	\end{itemize}
	 	This proves the first inequality. For the second inequality, By definition $V_N(x)=c_NV_0(x)$ if $x\in[a,b]$ and $V_N(x)=V_0(x)$ if $x\not\in[a-\tau,b+\tau]$. By Lemma \ref{lemma:trivial},
	 	\begin{align}\label{eq:260}
	 		\P^{\mu_1}(A_N)
	 		=&\big(\P^{\mu_3}(\Sigma_{[a,b]}^{(N)})\big)^{-1}\P^{\mu_3}(A_N\cap\Sigma_{[a,b]}^{(N)})\nonumber\\
	 		\le&\big(\P^{\mu_3}(\Sigma_{[a,b]}^{(N)})\big)^{-1}\P^{\mu_3}(A_N).
	 	\end{align}
	 	
	 	Let $B_N=\{(x_1,\ldots,x_N)\in\Sigma^{(N)}|\exists x_i\not\in[a,b]\}$. By the first  conclusion of this lemma, for $N>N_0$ we have
	 	\begin{align}\label{eq:262}
	 		&1-\P^{\mu_3}(\Sigma_{[a,b]}^{(N)})=\P^{\mu_3}(B_N)\le\sqrt{\P^{\mu_2}(B_N)}\exp(CN^{2\epsilon_0}).
	 	\end{align}
	 	According to the last conclusion of Lemma \ref{lemma:minimizer}, there are constants $C_4>0$ and $N_0>0$ such that if $N>N_0$, then 
	 	\begin{align}\label{eq:263}
	 		\P^{\mu_2}(B_N)<\exp(-C_4N).
	 	\end{align}
	 	According to \eqref{eq:262} and \eqref{eq:263}, if $N>N_0$, then 
	 	\begin{align}\label{eq:264}
	 		1-\P^{\mu_3}(\Sigma_{[a,b]}^{(N)})<\frac{1}{2}.
	 	\end{align}
	 	\eqref{eq:260} and \eqref{eq:264} prove the second inequality.
	 \end{proof}
	 
	 \begin{coro}\label{thm:rigidity_for_mu_1}
	 	For any $\epsilon>0$, there exist constants $N_0>0$ and
	 	$C>0$  such that
	 	if $N>N_0$ and $\epsilon_0<C/10$, then
	 	$$\P^{\mu_1}(\exists k\in[1,N]\text{ s.t. }|x_k-\eta_k^0|>N^{-\frac{2}{3}+\epsilon} \cdot\hat k^{-1/3})<\exp(-N^{C/2}).$$
	 	Here $\eta_k^0$ and $\epsilon_0$ are defined in Section \ref{some_models}.
	 \end{coro}
	 
	 \begin{proof} Set
	 	$$A_N=\{(x_1,\ldots,x_N)\in\Sigma^{(N)}\big|\exists k\in[1,N]\text{ such that }|x_k-\eta_k^0|>N^{-\frac{2}{3}+\epsilon}\cdot\hat k^{-1/3}\}.$$
	 	According to Theorem \ref{thm:rigidity_for_mu_2} there are $C_1>0$ and $N_0>0$  such that if $N>N_0$ then
	 	\begin{align}\label{eq:241}
	 		\P^{\mu_2}(A_N)\le\exp(-N^{C_1})
	 	\end{align}
	 	According to \eqref{eq:241} and Lemma \ref{lemma:mu_1mu_2mu_3}, there exist $C_2>0$ and  $N_0>0$ such that if $N>N_0$ and $\epsilon_0<C_1/10$, then
	 	\begin{align}\label{eq:236}
	 		\P^{\mu_1}(A_N)\le 2\sqrt{\P^{\mu_2}(A_N)}\exp(C_2N^{2\epsilon_0})\le2\exp(-\frac{1}{2}N^{C_1}+C_2N^{0.2C_1})\le\exp(-N^{C_1/2}).
	 	\end{align}
	 \end{proof}

	 \section{Decomposition of beta ensemble in the multi-cut regime}\label{sec:decomposition of beta ensemble}

	 Notice that we can rewrite the density of $\mu$ as
	 $$\frac{1}{Z(\mu)}e^{-\frac{N\beta}{2}\sum_{i=1}^NV(x_i)}\prod_{i<j}|x_i-x_j|^\beta=\frac{1}{Z(\mu)}e^{-\beta N\mathcal H(x)}$$
	 where 
	 \begin{align}\label{eq:2202}
	 	\mathcal H(x)=\frac{1}{2}\sum_{i=1}^NV(x_i)-\frac{1}{2N}\sum_{i\ne j}\ln|x_i-x_j|.
	 \end{align}
	 Recall that the support of the equilibrium measure of $\mu$ is $\sigma=[A_1,B_1]\cup\ldots\cup[A_q,B_q]$.
	 
	 \begin{definition}
	 	Suppose $\kappa>0$ is a small  constant  satisfying
	 	the following conditions:
	 	\begin{enumerate}\label{condition_on_kappa}
	 		\item $\kappa<\frac{1}{100}\min(A_2-B_1,A_3-B_2,\ldots,A_q-B_{q-1})$   and $\kappa<0.1$,
	 		\item $V(x)$ and $r(x)$ can be analytically extended to a neighborhood of
	 		$\{z\in\mathbb{C}|\text{dist}(z,\sigma)\le30\kappa\}$,
	 		\item $r(z)$ does not vanish on a neighborhood of
	 		$\{z\in\mathbb{C}|\text{dist}(z,\sigma)\le30\kappa\}$.
	 	\end{enumerate}
	 	
	 \end{definition}
	 
	 \begin{remark}
	 	The $\kappa$ satisfying the above conditions exists because of the fifth conclusion in Lemma \ref{lemma:minimizer} and  Hypothesis \ref{Hypothesis for initial model}. 
	 \end{remark}
	 \begin{definition}\label{def:sets}
	 	\begin{itemize}
	 		\item For $1\le i\le q$, let $\sigma_i=[A_i,B_i]$ and
	 		$\sigma_i(\kappa)=[A_i-\frac{\kappa}{2},B_i+\frac{\kappa}{2}]$.
	 		\item   Set
	 		$\sigma(\kappa)=\cup_{i=1}^q\sigma_i(\kappa)$ and
	 		$\Sigma_\kappa^{(N)}=\{x\in\sigma(\kappa)^N|x_1\le\ldots\le x_N\}$.
	 	\end{itemize}
	 \end{definition}
	 Obviously 
	 $\cup_{i=1}^q\sigma_i=\sigma$ and
	 $\sigma_i(\kappa)\cap\sigma_j(\kappa)=\emptyset$ if $i\ne j$.

	 \begin{theorem}\label{thm:fluctuation_multi_cut}
	 	Suppose $O\subset\mathbb C$ is a neighborhood of $\cup_{i=1}^q[A_i,B_i]$. Suppose $h_N(x)\in  C^\infty(\mathbb{R})$ and there is a constant $C_b>0$ such that
	 	\begin{itemize}
	 		\item  $h_N$ can be analytically extended to $O$ and $\sup_{z\in O}|h_N(z)|<C_b$ for all $N$;
	 		\item $\|h_N^{(i)}\|_\infty<C_b$ for $0\le i\le 6$ and $N\ge1$. Here $\|h_N^{(i)}\|_\infty=\sup_{x\in\mathbb R}|h_N^{(i)}(x)|$.
	 	\end{itemize}
	 	Then for any $w_1>0$ there exists $C>0$ and $N_0>0$  such that if $N>N_0$ then
	 	\begin{align*}
	 		\P^\mu\Big(\Big|\sum\limits_{i=1}^Nh_N(x_i)-N\int h_N(t)\rho(t)dt\Big|>N^{w_1}\Big)<\exp(-N^C).
	 	\end{align*}
	 \end{theorem}
	 Theorem
	 \ref{thm:fluctuation_multi_cut} was essentially proved in \cite{Shcherbina_fluctuation} and \cite{BG_multi_cut}, but was expressed in a different way there. For the convenience of readers, we provide a proof of Theorem
	 \ref{thm:fluctuation_multi_cut} in Section
	 \ref{appendix:proof_of_fluctuation_for_initial_model}.

	 \subsection{The measure $\mu_\kappa$ on $\Sigma_\kappa^{(N)}$}\label{sec:mu_kappa}
	 
	 Suppose $\mu_\kappa=\mu_\kappa^{(N)}$ is a probability measure on
	 $\Sigma_\kappa^{(N)}$ with density
	 
	 $$\frac{1}{Z(\mu_\kappa)}\exp(-\beta N\mathcal H(x))\prod\limits_{i=1}^N\mathds{1}_{\sigma(\kappa)}(x_i)$$
	 where $Z(\mu_\kappa)$ is the normalization function. So $\mu_\kappa$
	 depends on $V$, $\kappa$ and $N$.
	 
	 \begin{lemma}\label{lemma:fluctuation_for_mu_kappa}
	 	Suppose the assumptions of Theorem \ref{thm:fluctuation_multi_cut} hold. Then for any $\tau>0$, there exist constants $C>0$, $N_0>0$ such that if
	 	$N>N_0$, then
	 	$$\P^{\mu_\kappa}\Big(\Big|\sum\limits_{i=1}^Nh_N(x_i)-N\int_{\mathbb{R}}h_N(t)\rho(t)dt\Big|>N^{\tau}\Big)<\exp(-N^C).$$
	 \end{lemma} 
	 \begin{proof} Suppose $x=(x_1,\ldots,x_N)$.
	 	According to Lemma \ref{lemma:trivial},
	 	\begin{multline*}
	 		\P^{\mu_\kappa}\Big(\Big|\sum\limits_{i=1}^Nh_N(x_i)-N\int_{\mathbb{R}}h_N(x)\rho(x)dx\Big|>N^{\tau}\Big)\\
	 		=\Big(\P^\mu\Big(\Sigma_\kappa^{(N)}\Big)\Big)^{-1}\P^\mu\Big(\Big|\sum\limits_{i=1}^Nh_N(x_i)-N\int_{\mathbb{R}}h_N(x)\rho(x)dx\Big|>N^{\tau}\text{
	 			and
	 		}x\in\Sigma_\kappa^{(N)}\Big)
	 	\end{multline*}
	 	According to the last conclusion of Lemma \ref{lemma:minimizer},  there exist
	 	$C_1>0$ and $N_1>0$ such that if
	 	$N>N_1$, then
	 	$\P^\mu\Big(\Sigma_\kappa^{(N)}\Big)>1-\exp(-C_1N)>\frac{1}{2}$
	 	and therefore
	 	\begin{multline*}
	 		\P^{\mu_\kappa}\Big(\Big|\sum\limits_{i=1}^Nh_N(x_i)-N\int_{\mathbb{R}}h_N(x)\rho(x)dx\Big|>N^{\tau}\Big)
	 		\le2\P^\mu\Big(\Big|\sum\limits_{i=1}^Nh_N(x_i)-N\int_{\mathbb{R}}h_N(x)\rho(x)dx\Big|>N^{\tau}\Big).
	 	\end{multline*}
	 	Applying Theorem \ref{thm:fluctuation_multi_cut}  we complete the
	 	proof.
	 \end{proof}

	 \begin{coro}\label{lemma:initial estimate 3}
	 	Suppose $i\in\{1,\ldots,q\}$ and $\tau>0$. Set
	 	$$\Omega_N^{(i)}(\tau)=\{|\sharp\{j|x_j\in \sigma_i(\kappa)\}-NR_i|\le N^\tau\}$$
	 	and
	 	\begin{align}\label{eq:2252}
	 		\Omega_N(\tau)=\cap_{i=1}^q\Omega_N^{(i)}(\tau).
	 	\end{align}
	 	There exist constants $C>0$,
	 	$N_0>0$   such that if
	 	$N>N_0$, then
	 	$$\P^{\mu_\kappa}\Big(\Big(\Omega_N^{(i)}(\tau)\Big)^c\Big)<\exp(-N^C),\quad \P^{\mu_\kappa}\Big(\Big(\Omega_N(\tau)\Big)^c\Big)<\exp(-N^C).$$
	 \end{coro}
	 \begin{proof}
	 	Suppose $S'$ is an open interval containing $\sigma_i(\kappa)$ and
	 	$S'\cap\sigma_j(\kappa)=\emptyset$ for all $j\ne i$. Suppose $f\in C^\infty(\mathbb{R})$  satisfies:
	 	\begin{enumerate}
	 		\item $f(x)=1$ if $x\in \sigma_i(\kappa)$,
	 		\item $f(x)=0$ if $x\not\in S'$,
	 		\item $0\le f(x)\le1$ if $x\in S'\backslash \sigma_i(\kappa)$.
	 	\end{enumerate}
	 	So $NR_i=N\int f(t)\rho(t)dt$, $\sum
	 	f(x_j)=\sharp\{j|x_j\in \sigma_i(\kappa)\}$ for all $x\in\Sigma_\kappa^{(N)}$ and
	 	\begin{align*}
	 		\P^{\mu_\kappa}\Big(\Big(\Omega_N^{(i)}(\tau)\Big)^c\Big)=	\P^{\mu_\kappa}(|\sharp\{j|x_j\in
	 		\sigma_i(\kappa)\}-NR_i|>N^\tau) =\P^{\mu_\kappa}(|\sum
	 		f(x_i)-N\int f(x)\rho(x)dx|>N^{\tau}).
	 	\end{align*}
	 	Applying Lemma \ref{lemma:fluctuation_for_mu_kappa} we complete the
	 	proof of the first inequality in the conclusion. This together with \eqref{eq:2252} yield the second inequality.
	 \end{proof}

	 \subsection{The measure $\mu_r$ on $\Sigma_\kappa^{(N)}$}\label{sec:mu_r}

	 The next measure $\mu_r$ was introduced by M. Shcherbina
	 \cite{Shcherbina,Shcherbina_fluctuation} and by Borot and Guionnet \cite{BG_multi_cut}. Let $\mu_r=\mu_r^{(N)}$ be a probability
	 measure on $\Sigma_\kappa^{(N)}$ with density
	 
	 \begin{align}
	 	\frac{1}{Z(\mu_r)}\exp\Big(-N\beta \mathcal H_r(x)\Big)
	 \end{align}
	 where
	 $$\mathcal H_r(x)=\frac{1}{2}\sum\limits_{i=1}^NV^{(r)}(x_i)-\frac{1}{2N}\sum\limits_{i\ne j}\ln|x_i-x_j|\sum\limits_{m=1}^q\mathds{1}_{\sigma_m(\kappa)}(x_i)\mathds{1}_{\sigma_m(\kappa)}(x_j)+\frac{N}{2}\Sigma^*$$
	 and
	 \begin{enumerate}
	 	\item $Z(\mu_r)$ is the normalization constant:
	 	$Z(\mu_r)=\int_{\Sigma_\kappa^{(N)}}\exp\Big(-N\beta\mathcal H_r(x)\Big)dx$,
	 	\item
	 	$V^{(r)}(x)=\sum\limits_{m=1}^q\mathds{1}_{\sigma_m(\kappa)}(x)\Big(V(x)-2\int_{\sigma\backslash\sigma_m}\rho(y)\ln|x-y|dy\Big)$,
	 	\item $\Sigma^*=\sum\limits_{i\ne
	 		j}\int_{\sigma_i}\int_{\sigma_j}\rho(x)\rho(y)\ln|x-y|dxdy$.
	 \end{enumerate}
	 
	 By the definition of $\mathcal H$ and $\mathcal H_r$ (see \eqref{eq:2202}), for
	 $x\in\Sigma_\kappa^{(N)}$,
	 \begin{align}\label{eq:230}
	 	\Delta \mathcal H(x):=\mathcal H_r(x)-\mathcal H(x)=\frac{1}{2N}\sum\limits_{i\ne j}\ln|x_i-x_j|\sum\limits_{m\ne m'}\mathds{1}_{\sigma_m(\kappa)}(x_i)\mathds{1}_{\sigma_{m'}(\kappa)}(x_j)-\sum\limits_{j=1}^N V^*(x_j)+\frac{N}{2}\Sigma^*
	 \end{align}
	 where
	 $V^*(x)=\sum\limits_{i=1}^q\mathds{1}_{\sigma_i(\kappa)}(x)\int_{\sigma\backslash\sigma_i}\ln|x-y|\rho(y)dy$.

	 \begin{proposition}\label{prop:small_for_mu_r_implies_small_for_mu_kappa}
	 	Suppose $A_N$ is a   Borel subset of
	 	$\Sigma_\kappa^{(N)}$. Suppose there exist $C_1>0$, $N_1>0$ such that
	 	$\P^{\mu_r}(A_N)\le\exp(-N^{C_1})$ when $N\ge N_1$. Then there exist
	 	$C_2>0$, $N_2>0$ 
	 	such that $$\P^{\mu_\kappa}(A_N)\le\exp(-N^{C_2})$$ when $N\ge N_2$.
	 \end{proposition}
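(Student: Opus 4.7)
The plan is to compare $\mu_\kappa$ and $\mu_r$ through their Radon--Nikodym derivative $\frac{d\mu_\kappa}{d\mu_r}(\lambda) = \frac{Z_{\mu_r}}{Z_{\mu_\kappa}}\exp(N\beta\Delta\mathcal{H}(\lambda))$, where $\Delta\mathcal{H}:=\mathcal{H}_r-\mathcal{H}$. Splitting on the ``good'' event $B_N:=\{|N\beta\Delta\mathcal{H}|\le N^{c_0}\}$ for small $c_0>0$ one obtains
$$\mu_\kappa(A_N)\le \frac{Z_{\mu_r}}{Z_{\mu_\kappa}}\,e^{N^{c_0}}\,\mu_r(A_N)+\mu_\kappa(B_N^c)\le \frac{Z_{\mu_r}}{Z_{\mu_\kappa}}\,e^{N^{c_0}}\,e^{-N^{C_1}}+\mu_\kappa(B_N^c),$$
so the proposition reduces to two tasks: (i) a tail bound $\mu_\kappa(B_N^c)\le\exp(-N^{c'})$, and (ii) an upper bound $Z_{\mu_r}/Z_{\mu_\kappa}\le\exp(N^{c''})$, with $c_0,c',c''$ chosen small enough that the right-hand side is dominated by $\exp(-N^{C_2})$ for some $C_2>0$.

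For (i), the pivotal identity is that $\Delta\mathcal{H}$ can be rewritten as a quadratic form in the signed empirical measure $\tilde L_N:=L_N-\rho$:
$$\Delta\mathcal{H}(\lambda)=\frac{N}{2}\iint F(x,y)\,d\tilde L_N(x)\,d\tilde L_N(y),\qquad F(x,y):=\sum_{\alpha\ne\alpha'}\mathds{1}_{\sigma_\alpha(\kappa)}(x)\mathds{1}_{\sigma_{\alpha'}(\kappa)}(y)\ln|x-y|.$$
This is verified by expanding $\iint F\,d\tilde L_N\,d\tilde L_N$ and matching the three ingredients of $\Delta\mathcal{H}$: the cross-log double sum, the $V^*$-term (using the identity $V^*(x)=\int F(x,y)\rho(y)dy$ together with the symmetry of $F$), and the $\Sigma^*$-term. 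Crucially, since the standing choice of $\kappa$ keeps distinct cuts at distance $\ge 100\kappa$, the kernel $F$ is bounded and $C^\infty$ on $\sigma(\kappa)^2$: the logarithmic singularity at $x=y$ is never approached. Approximating $F$ by a finite tensor sum $\sum_{k=1}^K h_k(x)g_k(y)$ with smooth $h_k,g_k$ reduces $\iint F\,d\tilde L_N\,d\tilde L_N$ to a finite combination of products of linear statistics $\int h_k\,d\tilde L_N$; applying Lemma \ref{lemma:fluctuation_for_mu_kappa} to each factor gives $|\int h_k\,d\tilde L_N|=O(N^{\tau-1})$ with $\mu_\kappa$-probability $1-\exp(-N^c)$, so $|N\beta\Delta\mathcal{H}|=O(N^{2\tau})$ with the same probability, proving (i).

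The main obstacle is (ii): the direct approach $Z_{\mu_r}/Z_{\mu_\kappa}=\E^{\mu_\kappa}[e^{-N\beta\Delta\mathcal{H}}]$ cannot be closed by splitting on $B_N$, because the a priori bound $|\Delta\mathcal{H}|=O(N)$ on $\Sigma_\kappa^{(N)}$ inflates $e^{-N\beta\Delta\mathcal{H}}$ to $O(e^{N^2})$ on $B_N^c$, swamping the tail decay. I would instead apply Bogolyubov's inequality (Lemma \ref{Bogolyubov's inequality}) in the form
$$\ln\frac{Z_{\mu_r}}{Z_{\mu_\kappa}}\le -N\beta\,\E^{\mu_r}[\Delta\mathcal{H}],$$
converting (ii) into a \emph{lower} bound on $\E^{\mu_r}[\Delta\mathcal{H}]$. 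Under $\mu_r$ the cross-cut log interactions have been removed, so eigenvalues in distinct cuts are independent conditional on the occupation numbers and each cut is a one-cut $\beta$-ensemble for which strong rigidity and CLT-type fluctuation bounds are already available in Chapter \ref{chap:tilde_mu}. Combining these estimates with the diagonal vanishing $F(x,x)\equiv 0$ (forced by $\sigma_\alpha(\kappa)\cap\sigma_{\alpha'}(\kappa)=\emptyset$ for $\alpha\ne\alpha'$), one obtains $\E^{\mu_r}[\iint F\,d\tilde L_N\,d\tilde L_N]=O(N^{-2})$, whence $|\E^{\mu_r}[\Delta\mathcal{H}]|=O(1/N)$ and $Z_{\mu_r}/Z_{\mu_\kappa}$ is uniformly bounded. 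The delicate technical point I anticipate will be the rigorous transfer of the one-cut fluctuation estimates to the mixture structure of $\mu_r$: one must combine Lemma \ref{lemma:initial estimate 3} (which controls the occupation numbers $\sharp\{j:\lambda_j\in\sigma_i(\kappa)\}$ up to $N^\tau$) with the one-cut rigidity results of Chapter \ref{chap:tilde_mu}, while handling the randomness of the occupation numbers themselves.
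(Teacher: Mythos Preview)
Your proposal is essentially the paper's proof: the paper makes the same split on the size of $\Delta\mathcal{H}$, uses the same Jensen bound $\ln(Z_{\mu_r}/Z_{\mu_\kappa})\le N\beta\,|\E^{\mu_r}[\Delta\mathcal{H}]|$ together with the estimate $|\E^{\mu_r}[\Delta\mathcal{H}]|=O(1/N)$ (cited from \cite{Shcherbina} rather than reproved), and for the tail $\mu_\kappa(\Delta\mathcal{H}>t)$ expands a smooth periodic extension of $\ln|\cdot|$ in Fourier series---which is precisely a tensor decomposition of your $F$---and then bounds each mode via Lemma~\ref{lemma:fluctuation_for_mu_kappa}. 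Your quadratic-form identity $\Delta\mathcal{H}=\tfrac{N}{2}\iint F\,d\tilde L_N\,d\tilde L_N$ is just the compact form of the paper's $\Delta\mathcal{H}=\tfrac{1}{N}\sum_{\alpha<\alpha'}\sum_k c_k I_k^\alpha II_k^{\alpha'}$.
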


	 \begin{proof}
	 	Without loss of generality, suppose $C_1<2$. For any $t>0$,
	 	\begin{multline*}
	 		\P^{\mu_\kappa}(A_N)=\P^{\mu_\kappa}(A_N\cap\{\Delta
	 		\mathcal H>t\})+\P^{\mu_\kappa}(A_N\cap\{\Delta \mathcal H\le t\})\\
	 		\le \P^{\mu_\kappa}(\Delta
	 		\mathcal H>t)+\frac{Z(\mu_r)}{Z(\mu_\kappa)}\exp(\beta Nt)\P^{\mu_r}(A_N)
	 	\end{multline*}
	 	By Jensen's inequality,
	 	$\ln\frac{Z(\mu_r)}{Z(\mu_\kappa)}=-\ln\E^{\mu_r}(\exp(\beta N\Delta
	 	\mathcal H))\le\E^{\mu_r}(-\beta N\Delta \mathcal H)\le\beta N|\E^{\mu_r}(\Delta
	 	\mathcal H)|$.

	 	By \cite{Shcherbina} there is $N_0>0$
	 	such that 
	 	\begin{align}\label{eq:253}
	 		|\E^{\mu_r}(\Delta \mathcal H)|\le  N^{-1+\frac{1}{3}C_1}
	 	\end{align}
	 	if $N>N_0$.  The inequality \eqref{eq:253} was proved in \cite{Shcherbina}.  See the fifth line on Page 785 of \cite{Shcherbina}. We remark that the main result of \cite{Shcherbina} is for $\beta\in\{1,2,4\}$ and polynomial $V$, but its proof of \eqref{eq:253} works for general $\beta>0$ and real analytic $V$.
	 	
	 	Setting $t=N^{-1+\frac{2}{3}C_1}$ we have for $N\ge N_0$:
	 	\begin{align}\label{eq:265}
	 		\P^{\mu_\kappa}(A_N)\le\P^{\mu_\kappa}(\Delta
	 		\mathcal H>N^{-1+\frac{2}{3}C_1})+\exp(\beta N^{\frac{1}{3}C_1}+\beta
	 		N^{\frac{2}{3}C_1}-N^{C_1}).
	 	\end{align}
	 	
	 	Now we use the method of Section 3 of \cite{Shcherbina} to estimate $\Delta\mathcal H$. By \eqref{eq:230},
	 	$$\Delta \mathcal H=\sum\limits_{m\ne m'}\Phi(m,m')$$
	 	where
	 	\begin{align*}
	 		\Phi(m,m')=&\frac{1}{2N}\sum\limits_{i\ne
	 			j}\ln|x_i-x_j|\mathds{1}_{\sigma_m(\kappa)}(x_i)\mathds{1}_{\sigma_{m'}(\kappa)}(x_j)\\
	 		-&\sum\limits_{j=1}^N\mathds{1}_{\sigma_m(\kappa)}(x_j)\int_{\sigma_{m'}}\ln|x_j-y|\rho(y)dy+\frac{N}{2}\int_{\sigma_m}\int_{\sigma_{m'}}\ln|x-y|\rho(x)\rho(y)dxdy
	 	\end{align*}
	 	
	 	Let $L=B_q-A_1+2$. Since
	 	$0<\kappa<\min(0.1,(A_2-B_1)/3,(A_3-B_2)/3,\ldots,(A_q-B_{q-1})/3)$,
	 	we have $|x-y|\in(\frac{1}{3}\min(A_2-B_1,\ldots,A_q-B_{q-1}),L-1)$
	 	for any $x\in\sigma_m(\kappa)$, $y\in\sigma_{m'}(\kappa)$
	 	with $m\ne m'$. So we can construct a function $g(x)$ such
	 	that
	 	\begin{enumerate}
	 		\item $g(x)$ depends only on $V$ and is independent of $\kappa$,
	 		\item $g(x)$ is smooth,
	 		\item $g(x)$ has a period $2L$,
	 		\item $g(x-y)=\ln|x-y|$ whenever $x\in\sigma_m(\kappa)$,
	 		$y\in\sigma_{m'}(\kappa)$ and $m\ne m'$.
	 	\end{enumerate}
	 	
	 	By Fourier transform,
	 	$$g(x)=\sum\limits_{k=-\infty}^{+\infty}c_k\exp(\frac{k\pi x}{L}{\rm i})$$
	 	where
	 	\begin{align}\label{eq:2204}
	 		c_k=\frac{1}{2L}\int_{-L}^Lg(x)\exp(-\frac{k\pi x}{L}{\rm i})dx.
	 	\end{align}
	 	(Here we should understand the sum $\sum_{k=-\infty}^{+\infty}$ as $\lim_{M\to\infty}\sum_{k=-M}^M$.)
	 	We have from the periodicity of $g$ that for any $p\in\{0,1,2,\ldots\}$ and $k\ne0$,
	 	\begin{align}\label{eq:246} 
	 		|c_k|=\Big|\frac{1}{2L}\Big(\frac{L}{k\pi{\rm i}}\Big)^p\int_{-L}^L\exp\big(-\frac{k\pi x}{L}{\rm i}\big)g^{(p)}(x)dx\Big|\le\Big|\frac{L}{k\pi}\Big|^p\|g^{(p)}\|_\infty.
	 	\end{align}

	 	Therefore when $m\ne m'$,
	 	\begin{multline}\label{eq:2203}
	 		\Phi(m,m')+\Phi(m',m)\\
	 		=\frac{1}{N}\sum\limits_{i\ne
	 			j}\ln|x_i-x_j|\mathds{1}_{\sigma_m(\kappa)}(x_i)\mathds{1}_{\sigma_{m'}(\kappa)}(x_j)-\sum\limits_{j=1}^N\mathds{1}_{\sigma_m(\kappa)}(x_j)\int_{\sigma_{m'}}\ln|x_j-y|\rho(y)dy\\
	 		-\sum\limits_{j=1}^N\mathds{1}_{\sigma_{m'}(\kappa)}(x_j)\int_{\sigma_{m}}\ln|x_j-y|\rho(y)dy+N\int_{\sigma_m}\int_{\sigma_{m'}}\ln|x-y|\rho(x)\rho(y)dxdy\\
	 		=\sum\limits_{k\in\mathbb{Z}}\frac{c_k}{N}\Big[\sum\limits_{i=1}^Ne^{\frac{{\rm i}k\pi x_i}{L}}\mathds{1}_{\sigma_m(\kappa)}(x_i)-N\int_{\sigma_m}e^{\frac{{\rm i}k\pi
	 				x}{L}}\rho(x)dx\Big]\Big[\sum\limits_{j=1}^Ne^{-\frac{{\rm i}k\pi x_j}{L}}\mathds{1}_{\sigma_{m'}(\kappa)}(x_j)-N\int_{\sigma_{m'}}e^{-\frac{{\rm i}k\pi
	 				y}{L}}\rho(y)dx\Big]\\
	 		=\frac{1}{N}\sum\limits_{k\in\mathbb{Z}}c_kI_k^m \overline{I_k^{m'}}
	 	\end{multline}
	 	where
	 	$I_k^m=\sum\limits_{i=1}^Ne^{{\rm i}k\pi x_i/L}\mathds{1}_{\sigma_m(\kappa)}(x_i)-N\int_{\sigma_m}e^{{\rm i}k\pi
	 		x/L}\rho(x)dx$. We remark that each sum in \eqref{eq:2203} converges absolutely and this fact can be seen from \eqref{eq:246} with $p=2$.
	 	
	 	Obviously $|I_k^m|\le2N$ and  $\Delta
	 	\mathcal H=\frac{1}{2N}\sum_{m\ne m'}\sum_{k\in\mathbb{Z}}c_kI_k^m
	 	\overline{I_k^{m'}}$.
	 	
	 	Set $w=\frac{C_1}{100}$, $p>10+\frac{400}{C_1}$ and $p\in\mathbb{N}$. Then
	 	$\frac{2}{3}C_1>3+w-wp$. It is easy to see that there is $N_0>0$  such that if $N>N_0$ and $m\ne m'$, then
	 	\begin{multline*}
	 		\Big|\sum\limits_{|k|>N^w}c_kI_k^{m}
	 		\overline{I_k^{m'}}\Big|\le\sum\limits_{|k|>N^w}|c_k|4N^2\le8N^2\sum\limits_{k>N^w}\Big(\frac{L}{k\pi}\Big)^p\|g^{(p)}\|_\infty\\
	 		\le\frac{10}{p-1}\Big(\frac{L}{\pi}\Big)^{p}\|g^{(p)}\|_\infty
	 		N^{2+w-wp}
	 		\le\frac{10}{p-1}\Big(\frac{L}{\pi}\Big)^{p}\|g^{(p)}\|_\infty
	 		N^{\frac{2}{3}C_1-1}.
	 	\end{multline*}
	 	Therefore if $N>N_0$, then
	 	$\Big|\sum\limits_{|k|>N^w}c_kI_k^{m}
	 	\overline{I_k^{m'}}\Big|<\dfrac{N^{\frac{2}{3}C_1}}{4q^2}$ for all
	 	$m$, $m'$ and
	 	\begin{align}\label{eq:266}
	 		&\P^{\mu_\kappa}\Big(\Delta\mathcal H>N^{-1+\frac{2}{3}C_1}\Big)=\P^{\mu_\kappa}\Big(\sum_{m\ne m'}\sum_{k\in\mathbb{Z}}c_kI_k^m
	 		\overline{I_k^{m'}}>2N^{\frac{2}{3}C_1}\Big)\le\sum_{m\ne m'}\P^{\mu_\kappa}\Big(\big|\sum_{k\in\mathbb{Z}}c_kI_k^m
	 		\overline{I_k^{m'}}\big|>\dfrac{N^{\frac{2}{3}C_1}}{q^2}\Big)\nonumber\\
	 		\le&\sum_{m\ne m'}\P^{\mu_\kappa}\Big(\big|\sum_{|k|\le
	 			N^w}c_kI_k^m
	 		\overline{I_k^{m'}}\big|>\dfrac{N^{\frac{2}{3}C_1}}{2q^2}\Big)
	 		\le\sum_{m\ne m'}\P^{\mu_\kappa}\Big(\sum_{|k|\le
	 			N^w}\big|I_k^m
	 		\overline{I_k^{m'}}\big|>\dfrac{N^{\frac{2}{3}C_1}}{2q^2}\|g\|_\infty^{-1}\Big)\quad\text{(by \eqref{eq:246})}\nonumber\\
	 		\le&\sum_{m\ne m'}\sum_{|k|\le
	 			N^w}\P^{\mu_\kappa}\Big(\big|I_k^m
	 		\overline{I_k^{m'}}\big|>\dfrac{N^{\frac{2}{3}C_1-w}}{6q^2}\|g\|_\infty^{-1}\Big)\nonumber\\
	 		\le&\sum_{m\ne m'}\sum_{|k|\le
	 			N^w}\Bigg[\P^{\mu_\kappa}\Big(\big|I_k^m
	 		\big|>\dfrac{1}{\sqrt{6\|g\|_\infty}q}N^{C_1/12}\Big)+\P^{\mu_\kappa}\Big(\big|
	 		\overline{I_k^{m'}}\big|>\dfrac{1}{\sqrt{6\|g\|_\infty}q}N^{C_1/12}\Big)\Bigg]
	 	\end{align}
	 	
	 	Suppose $h_{m,\kappa,k}(x)=\phi_{m,\kappa}(x)e^{{\rm i}k\pi x/L}$ where $\phi_{m,\kappa}\in C^\infty(\mathbb{R})$ such that $\phi_{m,\kappa}(x)=1$ for $x\in\sigma_m(\kappa)$ and $\phi_{m,\kappa}(x)=0$ for $x\in\sigma_{m'}(\kappa)$, $\forall m'\ne m$. Then we have:
	 	\begin{enumerate}
	 		\item the real and imaginary parts of  $h_{m,\kappa,k}$ are both $C^\infty$,
	 		\item  $h_{m,\kappa,k}(x)=e^{{\rm i}k\pi x/L}$, whenever
	 		$x\in\sigma_m(\kappa)$; $h_{m,\kappa,k}(y)=0$ whenever
	 		$x\in\sigma_{m'}(\kappa)$ and $m'\ne m$,
	 		\item there exists a constant $C_2>0$ such that for any $k\in\mathbb{Z}$, $$\sum\limits_{i=0}^6\|(\text{Re}h_{m,\kappa,k})^{(i)}\|_\infty+\sum\limits_{i=0}^6\|(\text{Im}h_{m,\kappa,k})^{(i)}\|_\infty\le C_2(1+k^6),$$
	 		\item both the real part and the imaginary part of $h_{m,\kappa,k}(x)$ can be analytically extended to $O:=\{z\in\mathbb C|\text{dist}(z,\sigma)<\kappa/2\}$ and they are bounded by $1$ on $O$.
	 	\end{enumerate}
	 	
	 	Using Lemma \ref{lemma:fluctuation_for_mu_kappa} for the real and
	 	imaginary parts of $h_{m,\kappa,k}/N^{6w}$, we have that there exist constants $N_0>0$ and $C_3>0$
	 	such  that if $N>N_0$,
	 	then for $|k|\le N^w$
	 	\begin{align}\label{eq:267}
	 		\P^{\mu_\kappa}\Big(\big|I_k^m
	 		\big|>\dfrac{1}{\sqrt{6\|g\|_\infty}q}N^{C_1/12}\Big)=&\P^{\mu_\kappa}\Big(\big|\sum
	 		h_{m,\kappa,k}(x_i)-N\int
	 		h_{m,\kappa,k}(x)\rho(x)dx\big|>\dfrac{1}{\sqrt{6\|g\|_\infty}q}N^{C_1/12}\Big)\nonumber\\
	 		=&\P^{\mu_\kappa}\Big(\big|\sum
	 		\frac{h_{m,\kappa,k}(x_i)}{N^{6w}}-N\int
	 		\frac{h_{m,\kappa,k}(x)}{N^{6w}}\rho(x)dx\big|>\dfrac{1}{\sqrt{6\|g\|_\infty}q}N^{\frac{C_1}{12}-6w}\Big)\nonumber\\
	 		\le&\exp(-N^{C_3}).
	 	\end{align}
	 	(To use Lemma \ref{lemma:fluctuation_for_mu_kappa} we need to consider $\frac{h_{m,\kappa,k}(x)}{N^{6w}}$ instead of $h_{m,\kappa,k}(x)$ since Lemma \ref{lemma:fluctuation_for_mu_kappa} requires the test function and its first six derivatives are all bounded by a constant.) By \eqref{eq:265}, \eqref{eq:266} and \eqref{eq:267}, if $N>N_0$, then for some constant $C_4>0$,
	 	\begin{align*}
	 		\P^{\mu_\kappa}(A_N)\le&\P^{\mu_\kappa}(\Delta
	 		\mathcal H>N^{-1+\frac{2}{3}C_1})+\exp(\beta N^{\frac{1}{3}C_1}+\beta
	 		N^{\frac{2}{3}C_1}-N^{C_1})\\
	 		\le&q^2(2N^w+1)\cdot2\exp(-N^{C_3})+\exp(\beta N^{\frac{1}{3}C_1}+\beta
	 		N^{\frac{2}{3}C_1}-N^{C_1})\\
	 		\le&\exp(-N^{C_4}).
	 	\end{align*}
	 \end{proof}
	 
	 \subsection{The measures $\nu^{(i,M,\xi)}$  on $\Sigma_\kappa^{(M)}(i)$}\label{sec:the measure nu(i,N,cN)} Recall that $R_i=\int_{A_i}^{B_i}\rho(t)dt$ for $1\le i\le q$. Set
	 \begin{align}\label{eq:2205}
	 	\Sigma_\kappa^{(M)}(i):=\{(x_1,\ldots,x_M)\in\sigma_i(\kappa)^M |x_1\le\cdots\le x_M\}.
	 \end{align}
	 
	 \begin{definition}
	 	For any $i\in\{1,\ldots,q\}$, $M\in\mathbb N$ and $\xi>0$,	define $\nu^{(i,M,\xi)}$ to be a probability measure on
	 	$\Sigma_\kappa^{(M)}(i)$ with density
	 	\begin{align}\label{eq:237} 
	 		\dfrac{1}{Z(\nu^{(i,M,\xi)})}\exp\Big(-\frac{M\beta}{2}\sum\limits_{j=1}^M\xi\cdot\frac{1}{R_i}[V(x_j)-2\int_{\sigma\backslash\sigma_{i}}\ln|x_j-y|\rho(y)dy]\Big)\prod\limits_{1\le
	 			u<v\le M}|x_u-x_v|^\beta
	 	\end{align}
	 	where $Z(\nu^{(i,M,\xi)})$ is the normalization constant.
	 \end{definition}
	 Notice that $\nu^{(i,M,\xi)}$ has the same form as $\mu_1$ defined in Definition \ref{def:measures} with $N$ replaced by $M$, $c_N$ replaced by $\xi$ and $V_0$ replaced by
	 \begin{align}\label{eq:2212}
	 	\frac{1}{R_i}[V(x)-2\int_{\sigma\backslash\sigma_{i}}\ln|x-y|\rho(y)dy].
	 \end{align}
	 
	 The factor $\frac{1}{R_i}$ in \eqref{eq:2212} is important because it makes the potential function  satisfy the Euler-Lagrange condition for the $probability$ measure $\frac{1}{R_i}\rho(x)\mathds{1}_{\sigma_i}(x)dx$:
	 \begin{multline}\label{eq:2210}
	 	\Bigg[\frac{1}{R_i}[V(x)-2\int_{\sigma\backslash\sigma_{i}}\ln|x-y|\rho(y)dy]\Bigg]-2\int_{\mathbb R}\frac{1}{R_i}\rho(y)\mathds{1}_{\sigma_i}(y)\ln|x-y|dy\\
	 	\begin{cases}=\min_{t\in\mathbb R}\frac{1}{R_i}\Big[V(t)-2\int_\sigma\rho(y)\ln|t-y|dy\Big]\quad\forall x\in \sigma_i\\
	 		>\min_{t\in\mathbb R}\frac{1}{R_i}\Big[V(t)-2\int_\sigma\rho(y)\ln|t-y|dy\Big]\quad\forall x\in \sigma_i(\kappa)\backslash\sigma_i
	 	\end{cases}
	 \end{multline}
	 where we used the last assumption of Hypothesis \ref{Hypothesis for initial model} and the fact that the left hand side of \eqref{eq:2210} equals $\frac{1}{R_i}\Big[V(x)-2\int_\sigma\rho(y)\ln|x-y|dy\Big]$. Notice that \eqref{eq:2212} can be smoothly extended to $\mathbb R$ such that the identity in \eqref{eq:2210} holds for $x\in \sigma_i$ and the indequality in \eqref{eq:2210} holds for $x\in \mathbb R\backslash\sigma_i$. So by  the fourth conclution of Lemma \ref{lemma:minimizer} and Lemma \ref{coro:convergence_for_restriction}, if $\lim_{M\to\infty}\xi=1$, then the empirical measure of $\nu^{(i,M,\xi)}$ converges almost surely and in expectation to
	 \begin{align}\label{eq:2253}
	 	\frac{1}{R_i}\rho(x)\mathds{1}_{\sigma_i}(x)dx
	 \end{align}
	 as $M\to\infty$.  and therefore
	 $$\lim\limits_{M\to\infty}\frac{1}{M}\E^{\nu^{(i,M,\xi)}}\Big[\sum\limits_{j=1}^Mf(x_j)\Big]=\int_{\mathbb{R}}\frac{1}{R_i}\rho(x)\mathds{1}_{\sigma_i}(x)f(x)dx$$
	 as long as $f:\mathbb R\to\mathbb R$ is a bounded continuous function.
	 
	 \begin{definition}
	 	Define the classical position $\theta(i,M,k)$ of the $k$-th particle under $\nu^{(i,M,\xi)}$ by
	 	\begin{align}\label{definition of classical position for nu_s(i,N,c_N)}
	 		\theta(i,M,k)=\inf\Big\{x\Big|	\int_{A_i}^x\frac{1}{R_i}\rho(x)dx=\frac{k}{M}\Big\}\quad(1\le k\le M)
	 	\end{align}
	 \end{definition}
	 
	 \begin{lemma}\label{lemma:important}
	 	Suppose $\nu^{(i,M,\xi)}$ is defined as \eqref{eq:237}.	For any $\epsilon>0$, there exist constants $M_0>0$ and
	 	$C\in(0,0.01)$  such that
	 	if $1\le i\le q$, $\epsilon_0<C/10$, $M>M_0$ and $|\xi-1|< M^{-1+\epsilon_0}$, then
	 	\begin{align*}
	 		\P^{\nu^{(i,M,\xi)}}(\exists k\in[1,M]\text{ such that }|x_k-\theta(i,M,k)|>M^{-\frac{2}{3}+\epsilon}\cdot\big(\min(k,M+1-k)\big)^{-1/3})<e^{-M^C}
	 	\end{align*}
	 \end{lemma}
	 
	 \begin{proof}
	 	This lemma is  a direct application of Corollary \ref{thm:rigidity_for_mu_1}. To see this,	let
	 	\begin{align*}
	 		V_i(x)=\frac{1}{R_i}[V(x)-2\int_{\sigma\backslash\sigma_{i}}\ln|x-y|\rho(y)dy].
	 	\end{align*}
	 	By the property of convolution we have:
	 	\begin{itemize}
	 		\item  $V_i\in C^0(\mathbb{R})$;
	 		\item  $x\mapsto\int_{A_j}^{B_j}\rho(y)\ln|x-y|dy$ can be analytically extended either to $\mathbb{C}\backslash(-\infty,B_j]$ or to $\mathbb{C}\backslash[A_j,+\infty)$. 
	 	\end{itemize}
	 	So by the definition of $\kappa$, $V_i$ can be analytically extended to $\{z\in\mathbb{C}|\text{dist}(z,[A_i,B_i])<30\kappa\}$. According to Hypothesis \ref{Hypothesis for initial model}, 
	 	\begin{align*}
	 		&V_i(x)-2\int_{A_i}^{B_i}\frac{\rho(y)}{R_i}\ln|x-y|dy\\&\begin{cases}
	 			=\min\limits_{t\in[A_i-2\kappa,B_i+2\kappa]}\big[V_i(t)-2\int_{A_i}^{B_i}\frac{\rho(y)}{R_i}\ln|t-y|dy\big]\quad\text{if }x\in[A_i,B_i],\\>\min\limits_{t\in[A_i-2\kappa,B_i+2\kappa]}\big[V_i(t)-2\int_{A_i}^{B_i}\frac{\rho(y)}{R_i}\ln|t-y|dy\big]\quad\text{if }x\in[A_i-2\kappa,B_i+2\kappa]\backslash[A_i,B_i].
	 		\end{cases}
	 	\end{align*}
	 	Let:
	 	\begin{enumerate}
	 		\item $p(x)=C_1x^2$ with the coefficient $C_1>0$ large enough such that for every $x\in\mathbb{R}$: $$p(x)-2\int_{A_i}^{B_i}\frac{\rho(y)}{R_i}\ln|x-y|dy>\min\limits_{t\in[A_i-2\kappa,B_i+2\kappa]}\big[V_i(t)-2\int_{A_i}^{B_i}\frac{\rho(y)}{R_i}\ln|t-y|dy\big];$$ 
	 		\item $\phi(x):\mathbb{R}\to[0,1]$ be a smooth function such that $\phi(x)=1$ if $x\in [A_i-\kappa,B_i+\kappa]$ and $\phi(x)=0$ if $x\not\in [A_i-2\kappa,B_i+2\kappa]$;
	 		\item $U(x)=\phi(x)V_i(x)+(1-\phi(x))p(x)$. 
	 	\end{enumerate} 
	 	According to the construction, $U(x)$ satisfies the following conditions.
	 	\begin{itemize}
	 		\item $U(x)\in C^\infty(\mathbb{R})$ and $\liminf\limits_{|x|\to\infty}\frac{U(x)}{\ln|x|}>2$;
	 		\item $U(x)=V_i(x)$ for $x\in[A_i-\kappa,B_i+\kappa]$;
	 		\item $\inf\limits_{x\in\mathbb{R}}U''(x)>-\infty$;
	 		\item \begin{align*}
	 			U(x)-2\int_{A_i}^{B_i}\frac{\rho(y)}{R_i}\ln|x-y|dy\begin{cases}
	 				=\min\limits_{t\in\mathbb{R}}\big[U(t)-2\int_{A_i}^{B_i}\frac{\rho(y)}{R_i}\ln|t-y|dy\big]\quad\text{if }x\in[A_i,B_i],\\>\min\limits_{t\in\mathbb{R}}\big[U(t)-2\int_{A_i}^{B_i}\frac{\rho(y)}{R_i}\ln|t-y|dy\big]\quad\text{if }x\in\mathbb{R}\backslash[A_i,B_i].
	 			\end{cases}
	 		\end{align*}
	 	\end{itemize}
	 	Use Corollary \ref{thm:rigidity_for_mu_1} with
	 	\begin{itemize}
	 		\item $V_0(x)=U(x)$, $[a,b]=\sigma_i(\kappa)=[A_i-\frac{\kappa}{2},B_i+\frac{\kappa}{2}]$, $[c,d]=[A_i,B_i]$, $c_N=\xi$, $N=M$,
	 		\item $D=\{z\in\mathbb{C}|\text{dist}(z,[A_i,B_i])<\kappa\}$,
	 		\item $r_0(x)=\frac{{\rm i}r(x)}{R_i}\cdot\prod\limits_{j\ne i}[\sqrt{x-A_j}\sqrt{x-B_j}]$ (recall \eqref{eq:239}) 
	 		\item $\rho_0(x)=r_0(x)\sqrt{(x-A_i)(B_i-x)}\mathds{1}_{[A_i,B_i]}(x)=\frac{\rho(x)}{R_i}\mathds{1}_{[A_i,B_i]}(x),$
	 	\end{itemize}
	 	then we complete the proof. 
	 \end{proof}

	 \section{Proof of the main theorem}\label{sec:proof of main thm}
	 
	 Recall that $\eta_k=\eta_k^{(N)}$ is defined in \eqref{eq:268} and $\theta(i,M,
	 k)$ is defined in \eqref{definition of classical position for nu_s(i,N,c_N)}.
	 \begin{lemma}\label{lemma:lemma_on_k_i}
	 	Let $\alpha\in(0,\min_i\frac{R_i}{2})$ be an arbitrarily small constant. Suppose $c_1\in(0,1)$. There exists a constant $N_0>0$ such that if 
	 	\begin{itemize}
	 		\item[(i)] $N>N_0$,
	 		\item[(ii)] $k_1,\ldots,k_q$ are positive integers with $k_1+\cdots+k_q=N$,
	 		\item[(iii)] $|k_j-NR_j|\le N^{c_1}$ for $1\le j\le q$,
	 	\end{itemize}
	 	then for any $i$ in $\{1,\ldots,q\}$ we have
	 	\begin{enumerate}
	 		\item $[(R_1+\cdots+R_{i-1}+\alpha)N,(R_1+\cdots+R_i-\alpha)N]\subset[k_1+\cdots+k_{i-1}+\frac{\alpha}{2R_i}k_i,k_1+\cdots+k_i-\frac{\alpha}{2R_i}k_i]$
	 		\item $[k_1+\cdots+k_{i-1}+\frac{\alpha}{2R_i}k_i,k_1+\cdots+k_i-\frac{\alpha}{2R_i}k_i]\subset[(R_1+\cdots+R_{i-1}+\frac{\alpha}{3})N,(R_1+\cdots+R_i-\frac{\alpha}{3})N]$
	 		\item $|\frac{NR_s}{k_s}-1|\le k_i^{-1+2c_1}$, for all $s\in\{1,\ldots,q\}$
	 		\item$
	 		|\eta_k-\theta(i,k_i,\bar
	 		k)|\le CN^{-1+c_1},\quad\forall k\in[k_1+\cdots+k_{i-1}+\frac{\alpha}{2R_i} k_i,k_1+\cdots+k_i-\frac{\alpha}{2R_i} k_i]$
	 		\item $
	 		|\eta_k-\theta(1,k_1,
	 		k)|\le C N^{-\frac{5}{3}+c_1}\cdot k^{2/3},\quad\forall k\in[1,\alpha N]$
	 		\item $
	 		|\eta_k-\theta(q,k_q,
	 		k-(k_1+\cdots+k_{q-1}))|\le C N^{-\frac{5}{3}+c_1}\cdot(N+1-k)^{2/3},\quad\forall k\in[(1-\alpha) N,N]$
	 	\end{enumerate}
	 	where $\bar k:=k-(k_1+\cdots+k_{i-1})$ and $C>0$ is a constant.
	 \end{lemma}
	 \begin{proof}
	 	The first three conclusions are trivial. According to the second conclusion, if 
	 	\begin{itemize}
	 		\item $N>N_0$ and (ii), (iii) are satisfied
	 		\item $k\in[k_1+\cdots+k_{i-1}+\frac{\alpha}{2R_i} k_i,k_1+\cdots+k_i-\frac{\alpha}{2R_i} k_i]$
	 	\end{itemize}
	 	then both   $\eta_k$ and
	 	$\theta(i,k_i,\bar k)$ are in
	 	$[A_i+\epsilon_1,B_i-\epsilon_1]$ for some small constant $\epsilon_1>0$. In this case, let $\mathcal{M}:=\min\limits_{x\in[A_i+\epsilon_1,B_i-\epsilon_1]}\rho(x)>0$ and we have
	 	\begin{align*}
	 		&|\eta_k-\theta(i,k_i,\bar
	 		k)|\frac{\mathcal{M}}{R_i}\le\Big|\int_{\eta_k}^{\theta(i,k_i,\bar
	 			k)}\frac{1}{R_i}\rho(x)dx\Big|
	 		=\Big|\int_{A_i}^{\theta(i,k_i,\bar k)}\frac{1}{R_i}\rho(x)dx-\int_{A_i}^{\eta_k}\frac{1}{R_i}\rho(x)dx\Big|\\
	 		=&\Big|\dfrac{\bar
	 			k}{k_i}-\frac{1}{R_i}(\frac{k}{N}-(R_1+\cdots+R_{i-1}))\Big|\\
	 		=&\Big|\dfrac{\bar
	 			k(NR_i-k_i)}{NR_ik_i}+\frac{1}{NR_i}\big[(NR_1-k_1)+\cdots+(NR_{i-1}-k_{i-1})\big]\Big|\\
	 		\le&\Big|\dfrac{(NR_i-k_i)}{NR_i}\Big|+\frac{1}{NR_i}\big(|NR_1-k_1|+\cdots+|NR_{i-1}-k_{i-1}|\big)
	 		\le\frac{i}{R_i}N^{-1+c_1}\le\frac{q}{R_i}N^{-1+c_1}
	 	\end{align*}
	 	so $|\eta_k-\theta(i,k_i,\bar
	 	k)|\le \frac{q}{\mathcal M}N^{-1+c_1}$ and we proved the fourth conclusion.

	 	By definition we have
	 	$
	 	\int_{A_1}^{\eta_k}\rho(t)dt=\frac{k}{N}$ and $\int_{A_1}^{\theta(1,k_1,k)}\rho(t)/R_1dt=\frac{k}{k_1}$.
	 	Since $\alpha<R_1/2$, there exist constants $M_1>0$, $M_2>0$ and $N_0>0$ such that if $N>N_0$, $k\in[1,\alpha N]$ and (ii), (iii) are satisfied, 
	 	then 
	 	\begin{align*}
	 		M_2\sqrt{t-A_1}\ge	\rho(t)\ge M_1\sqrt{t-A_1},\quad\quad\forall t\in[A_1,\max(\eta_k,\theta(1,k_1,k))]
	 	\end{align*}
	 	and thus
	 	\begin{itemize}
	 		\item 	\begin{align}\label{eq:277}
	 			M_1\Big|\int_{\eta_k}^{\theta(1,k_1,k)}\sqrt{t-A_1}dt\Big|\le\Big|\int_{\eta_k}^{\theta(1,k_1,k)}\rho(t)dt\Big|=\frac{k}{N}\Big|\frac{NR_1}{k_1}-1\Big|\le \frac{k N^{c_1}}{Nk_1}
	 		\end{align}
	 		\item 
	 		\begin{align}\label{eq5}
	 			\frac{k}{N}=\int_{A_1}^{\eta_k}\rho(t)dt\le M_2\int_{A_1}^{\eta_k}\sqrt{t-A_1}dt=\frac{2M_2}{3}(\eta_k-A_1)^{3/2}
	 		\end{align}
	 		\item 
	 		\begin{align}\label{eq6}
	 			\frac{k}{k_1}=\int_{A_1}^{\theta(1,k_1,k)}\rho(t)/R_1dt\le \frac{M_2}{R_1}\int_{A_1}^{\theta(1,k_1,k)}\sqrt{t-A_1}dt=\frac{2M_2}{3R_1}(\theta(1,k_1,k)-A_1)^{3/2}
	 		\end{align}
	 	\end{itemize}
	 	According to \eqref{eq5} and \eqref{eq6}, if $N>N_0$ and $k\in[1,\alpha N]$, then 
	 	$$|\eta_k-A_1|\ge(\frac{3}{2M_2}\frac{k}{N})^{2/3},$$ 
	 	$$|\theta(1,k_1,k)-A_1|\ge (\frac{3R_1}{2M_2}\cdot\frac{k}{k_1})^{2/3}\ge (\frac{3}{4M_2}\cdot\frac{k}{N})^{2/3}$$
	 	and therefore
	 	\begin{align}\label{eq7}
	 		\Big|\int_{\eta_k}^{\theta(1,k_1,k)}\sqrt{t-A_1}dt\Big|\ge |\theta(1,k_1,k)-\eta_k|\sqrt{\min(\eta_k,\theta(1,k_1,k))-A_1}\ge|\theta(1,k_1,k)-\eta_k|\cdot(\frac{k}{N})^{1/3}\cdot C_1
	 	\end{align}
	 	where $C_1>0$ is a constant. By \eqref{eq:277} and \eqref{eq7}, if $N>N_0$ and $k\in[1,\alpha N]$, then 
	 	\begin{multline*}
	 		|\theta(1,k_1,k)-\eta_k|\le\frac{1}{C_1}(\frac{k}{N})^{-1/3} \Big|\int_{\eta_k}^{\theta(1,k_1,k)}\sqrt{t-A_1}dt\Big| 
	 		\\
	 		\le  \frac{1}{C_1}(\frac{k}{N})^{-1/3}\cdot \frac{1}{M_1}\frac{k N^{c_1}}{Nk_1}\le\frac{1}{C_1M_1}(\frac{k}{N})^{-1/3}\cdot \frac{k N^{c_1}}{NR_1N/2}.
	 	\end{multline*}
	 	So the fifth conclusion is proved. The sixth conclusion can be proved in the same way as the fifth one, or by changing the potential $V(x)\mapsto V(-x)$.
	 \end{proof}

	 Now we are ready to prove Theorem \ref{thm:main_thm}. Suppose $\alpha>0$ and $\epsilon>0$ are as in Theorem \ref{thm:main_thm}. Without loss of generality, suppose $\epsilon\in(0,0.01)$.
	 
	 According to Lemma \ref{lemma:important}
	 there exist constants $M_0>0$ and
	 $c_0\in(0,0.01)$  such that
	 if $1\le i\le q$, $\epsilon_0<c_0/10$, $M>M_0$ and $|\xi-1|< M^{-1+\epsilon_0}$, then
	 \begin{align}\label{eq:272}
	 	\P^{\nu^{(i,M,\xi)}}(\exists k\in[\frac{\alpha M}{2\max\limits_iR_i} ,(1-\frac{\alpha }{2\max\limits_iR_i})M]\text{ s.t. }|x_k-\theta(i,M,k)|>M^{-1+\frac{\epsilon}{2}})
	 	<\exp(-M^{c_0})
	 \end{align}
	 and
	 \begin{multline}\label{eq:278}
	 	\P^{\nu^{(i,M,\xi)}}\Bigg(\exists k\in[1,\frac{\alpha M}{0.9\min\limits_iR_i}]\cup[(1-\frac{\alpha }{0.9\min\limits_iR_i})M,M]\\
	 	\text{ such that }|x_k-\theta(i,M,k)|>M^{-\frac{2}{3}+\frac{\epsilon}{2}}\cdot(\min(k,M+1-k))^{-\frac{1}{3}}\Bigg)
	 	<\exp(-M^{c_0}) 
	 \end{multline}
	 Set  $\epsilon_*=\min(\frac{1}{100}c_0,\frac{1}{100}\epsilon)<0.0001$. Using the notation $\Omega_N(\cdot)$ defined in  Corollary \ref{lemma:initial estimate 3}:
	 $$\Omega_N(\epsilon_*)=\Big\{x\in\Sigma_\kappa^{(N)}\Big||\sharp\{j|x_j\in\sigma_i(\kappa)\}-NR_i|\le
	 N^{\epsilon_*},\forall 1\le i\le q\Big\}.$$
	 \begin{lemma}\label{lemma:measures}
	 	Suppose $E_N$ is a Borel subset of $\Sigma^{(N)}$. If there exist $\tau_1>0$ and $N_1>0$ such that $\P^{\mu_r}(E_N\cap\Omega_N(\epsilon_*))\le\exp(-N^{\tau_1})$ when $N>N_1$, then there exist  $\tau_2>0$ and $N_2>0$ such that $\P^\mu(E_N)\le\exp(-N^{\tau_2})$ for $N>N_2$.
	 \end{lemma}
	 \begin{proof}
	 	By Corollary \ref{lemma:initial estimate 3}, there are constants $C_1>0$ and
	 	$N_0>0$  such
	 	that if $N>N_0$, then
	 	\begin{align}
	 		\P^{\mu_\kappa}(\Omega_N(\epsilon_*)^c)\le\exp(-N^{C_1}).
	 	\end{align}
	 	
	 	According to Proposition
	 	\ref{prop:small_for_mu_r_implies_small_for_mu_kappa}, there exist constants
	 	$C_2>0$, $N_0>0$  such that if $N>N_0$, then
	 	$\P^{\mu_\kappa}(E_N\cap\Omega_N(\epsilon_*))\le\exp(-N^{C_2})$ and 
	 	\begin{align} \label{eq:273}
	 		\P^{\mu_\kappa}(E_N\cap\Sigma_\kappa^{(N)})\le\exp(-N^{C_2})+\exp(-N^{C_1}).
	 	\end{align}
	 	Notice that
	 	\begin{align} \label{eq:274}
	 		\P^\mu(E_N)\le\P^\mu(E_N\cap\Sigma_\kappa^{(N)})+\P^\mu(\Sigma^{(N)}\backslash\Sigma_\kappa^{(N)}).
	 	\end{align}
	 	According to Lemma \ref{lemma:minimizer}, there exist constants
	 	$C_3>0$, $N_0>0$  such that if $N>N_0$,
	 	then
	 	\begin{align} \label{eq:275}
	 		\P^\mu(\Sigma^{(N)}\backslash\Sigma_\kappa^{(N)})<\exp(-C_3N)
	 	\end{align}
	 	On the other hand, by Lemma \ref{lemma:trivial},
	 	\begin{align} \label{eq:276}
	 		\P^\mu(E_N\cap\Sigma_\kappa^{(N)})=\P^\mu(\Sigma_\kappa^{(N)})\P^{\mu_\kappa}(E_N\cap\Sigma_\kappa^{(N)})\le\P^{\mu_\kappa}(E_N\cap\Sigma_\kappa^{(N)}).
	 	\end{align}
	 	
	 	According to \eqref{eq:273}, \eqref{eq:274}, \eqref{eq:275},
	 	\eqref{eq:276}, the lemma is proved.
	 	
	 \end{proof} 
	 Suppose $A_N$ is a Borel subset of $\Sigma_\kappa^{(N)}$, then
	 \begin{align}\label{eq:271}
	 	\P^{\mu_r}(A_N\cap\Omega_N(\epsilon_*))=\dfrac{\int_{\Sigma_\kappa^{(N)}}\exp(-N\beta\mathcal H_r(x))\mathds{1}_{A_N\cap\Omega_N(\epsilon_*)}(x)dx}{\int_{\Sigma_\kappa^{(N)}}\exp(-N\beta\mathcal H_r(x))dx}=\dfrac{\sum\limits_{k_1+\cdots+k_q=N}Q_{(k_1,\ldots,k_q)}\cdot\Phi^{(A_N)}_{k_1,\ldots,k_q}}{\sum\limits_{k_1+\cdots+k_q=N}Q_{(k_1,\ldots,k_q)}}
	 \end{align}
	 where
	 $$Q_{(k_1,\ldots,k_q)}=\int_{\Sigma_\kappa^{(N)}}\Upsilon(k_1,\ldots,k_q)\exp\Big(-N\beta\mathcal H_r(x)\Big)dx,$$
	 \begin{align}\label{eq:2211}
	 	\Phi^{(A_N)}_{k_1,\ldots,k_q}=\dfrac{\int_{\Sigma_\kappa^{(N)}}\Upsilon(k_1,\ldots,k_q)\exp\Big(-N\beta\mathcal H_r(x)\Big)\mathds{1}_{A_N\cap\Omega_N(\epsilon_*)}(x)dx}{Q_{(k_1,\ldots,k_q)}}
	 \end{align}
	 and
	 \begin{align}\label{eq:2254}
	 	\Upsilon(k_1,\ldots,k_q)=\prod_{l=1}^q\Bigg[\prod_{j=k_1+\cdots+k_{l-1}+1}^{k_1+\cdots+k_l}\mathds1_{\sigma_l(\kappa)(x_j)}\Bigg].
	 \end{align}
	 
	 \begin{remark}
	 	We learnt the idea of considering the $q$-tuple $(k_1,\ldots,k_q)$ from Section 3 of \cite{Shcherbina}. 
	 \end{remark}
	 
	 From the definition of $\mathcal H_r$, we see that in the integral of
	 $Q_{(k_1,\ldots,k_q)}$ the particles in different cuts don't have
	 intersection. Thus the integral can be written as a product of
	 integrals over domains with lower dimensions, i.e.,
	 \begin{align}\label{eq:270}
	 	Q_{(k_1,\ldots,k_q)}=Q_{(k_1,\ldots,k_q)}^{(1)}\cdots
	 	Q_{(k_1,\ldots,k_q)}^{(q)}\cdot\exp(-\frac{\beta}{2}N^2\Sigma^*)
	 \end{align}
	 where
	 \begin{multline*}
	 	Q_{(k_1,\ldots,k_q)}^{(i)}\\=\int_{\Sigma_\kappa^{(k_i)}(i)}\exp\Big(-\frac{N\beta}{2}\sum\limits_{j=1}^{k_i}\big(V(x_j)-2\int_{\sigma\backslash\sigma_i}\ln|x_j-y|\rho(y)dy\big)\Big)\prod\limits_{u<v}|x_u-x_v|^\beta
	 	dx_1\cdots dx_{k_i}
	 \end{multline*}
	 and
	 $\Sigma_\kappa^{(n)}(i)=\{(x_1,\ldots,x_n)|x_1\le\cdots\le x_n,x_j\in\sigma_i(\kappa),\forall1\le
	 j\le n\}$ as defined in \eqref{eq:2205}. It is easy to see that the value of $Q_{(k_1,\ldots,k_q)}^{(i)}$ is independent of $k_1,\ldots,k_{i-1},k_{i+1},\ldots,k_q$.

	 \subsection{Rigidity in the bulk}
	 Fix $1\le i_0\le q$. We make the convention that
	 \begin{itemize}
	 	\item for  $(k_1,\ldots,k_q)\in\mathbb N^q$ satisfying $k_1+\cdots+k_q=N$, let $\bar k=k-(k_1+\cdots+k_{i_0-1})$ and $\textbf{k}_{i_0}=\{k_1+\cdots+k_{i_0-1}+1,\ldots,k_1+\cdots+k_{i_0}\}$;
	 	\item  for $(x_1,\ldots,x_N)\in\mathbb R^N$, let $\bar x=(x_{k_1+\cdots+k_{i_0-1}+1},\ldots,x_{k_1+\cdots+k_{i_0}})$.
	 \end{itemize}

	 Set 
	 $$A_N'(i_0)=\{x\in\Sigma_\kappa^{(N)}|\exists
	 k\in[(R_1+\cdots+R_{i_0-1}+\alpha)N,(R_1+\cdots+R_{i_0}-\alpha)N]\text{
	 	s.t. }|x_k-\eta_k|>N^{-1+\epsilon}\}$$
	 According to  Lemma \ref{lemma:measures}, to prove the first conclusion of Theorem \ref{thm:main_thm}, we only need to show that $\P^{\mu_r}(A_N'(i_0)\cap\Omega_N(\epsilon_*))$ is exponentially small. Writting $\P^{\mu_r}(A_N'(i_0)\cap\Omega_N(\epsilon_*))$ in the form of \eqref{eq:271}, to control the factor $\Phi^{(A_N'(i_0))}_{k_1,\ldots,k_q}$ in each term in the numerator of the right hand side of \eqref{eq:271}, we consider the identity: 
	 \begin{align}\label{eq:269}
	 	\Upsilon(k_1,\ldots,k_q)\mathds{1}_{A_N'(i_0)\cap\Omega_N(\epsilon_*)}(x)=1
	 \end{align}
	 where $\Upsilon(k_1,\ldots,k_q)$ is defined in \eqref{eq:2254}. If \eqref{eq:269} does not hold, then the integrand in the numerator of the definition of $\Phi^{(A_N'(i_0))}_{k_1,\ldots,k_q}$ is zero. See \eqref{eq:2211}. If \eqref{eq:269} does  hold, then there are $k_i$ particles in the $i-$th cut and
	 \begin{enumerate}
	 	\item for $1\le j\le q$ we have $|k_j-NR_j|\le N^{\epsilon_*}$
	 	\item there exists
	 	$k\in[(R_1+\cdots+R_{i_0-1}+\alpha)N,(R_1+\cdots+R_{i_0}-\alpha)N]$
	 	such that $|x_k-\eta_k|>N^{-1+\epsilon}$.
	 \end{enumerate}
	 By Lemma \ref{lemma:lemma_on_k_i}, if $N>N_0$ and \eqref{eq:269} is true, then there exist $k\in[k_1+\cdots+k_{i_0-1}+\frac{\alpha}{2R_{i_0}}k_{i_0},k_1+\cdots+k_{i_0}-\frac{\alpha}{2R_{i_0}}k_{i_0}]$ and a constant $C_1>0$ such that
	 $$|x_k-\theta(i_0,k_{i_0},\bar
	 k)|\ge|x_k-\eta_k|-|\eta_k-\theta(i_0,k_{i_0},\bar
	 k)|\ge
	 N^{-1+\epsilon}-C_1N^{-1+\epsilon_*}>\frac{1}{2}N^{-1+\epsilon}>k_{i_0}^{-1+{\frac{1}{2}\epsilon}}$$
	 therefore
	 $$\bar{x}\in \tilde{\Omega}_N(i_0):=\{y\in\Sigma_\kappa^{(k_{i_0})}(i_0)|\exists
	 k\in[\frac{\alpha}{2R_{i_0}}k_{i_0},k_{i_0}-\frac{\alpha}{2R_{i_0}}k_{i_0}]\text{
	 	s.t. } |y_k-\theta(i_0,k_{i_0},
	 k)|>k_{i_0}^{-1+\frac{1}{2}\epsilon}\}$$
	 and (according to the definition of $\Phi^{(A_N)}_{k_1,\ldots,k_q}$ and \eqref{eq:270})
	 \begin{align*}
	 	&\Phi^{(A_N'(i_0))}_{k_1,\ldots,k_q}\\
	 	\le&\frac{\int_{\Sigma_\kappa^{(k_{i_0})}(i_0)}\exp\Big[-\frac{N\beta}{2}\sum\limits_{j\in\textbf{k}_{i_0}}\big(V(x_j)-2\int_{\sigma\backslash\sigma_{i_0}}\ln|x_j-y|\rho(y)dy\big)\Big]\prod\limits_{u<v\atop
	 			u,v\in\textbf{k}_{i_0}}|x_u-x_v|^\beta
	 		\mathds{1}_{\tilde\Omega_N(i_0)}(\bar x)\prod\limits_{j\in\textbf{k}_{i_0}}dx_j}{Q_{(k_1,\ldots,k_q)}^{(i_0)}}\\
	 	=&\mathbb P^{\nu^{(i_0,k_{i_0},NR_{i_0}/k_{i_0})}}(\tilde\Omega_N(i_0)).
	 \end{align*} 
	 which implies
	 \begin{align*}
	 	\P^{\mu_r}(A_N'(i_0)\cap\Omega_N(\epsilon_*))\le
	 	\dfrac{\sum\limits_{k_1+\cdots+k_q=N\atop|k_i-NR_i|\le
	 			N^{\epsilon_*},\forall
	 			i}Q_{(k_1,\ldots,k_q)}\cdot\mathbb P^{\nu^{(i_0,k_{i_0},NR_{i_0}/k_{i_0})}}(\tilde\Omega_N(i_0))}{\sum\limits_{k_1+\cdots+k_q=N}Q_{(k_1,\ldots,k_q)}}\quad\text{(see \eqref{eq:271})}.
	 \end{align*}
	 This together with \eqref{eq:272} and the third conclusion of Lemma \ref{lemma:lemma_on_k_i} tell us 
	 $$\P^{\mu_r}(A_N'(i_0)\cap\Omega_N(\epsilon_*))\le\exp(-k_{i_0}^{c_0})\le\exp(-N^{c_0/2})\quad\text{if }N>N_0$$
	 where $c_0>0$ is the same as in \eqref{eq:272}. Notice that $\kappa$ depends only on $V$. Using Lemma \ref{lemma:measures} we complete the proof of the first conclusion of Theorem \ref{thm:main_thm}.
	 
	 \subsection{Rigidity near the extreme edges}
	 We prove the rigidity near the extreme edges in the similar way as for the bulk. Set 
	 $$A_N''=\{x\in\Sigma_\kappa^{(N)}|\exists
	 k\in[1,\alpha N]\text{
	 	such that }|x_k-\eta_k|>N^{-\frac{2}{3}+\epsilon}\cdot k^{-1/3}\};$$
	 $$A_N''':=\{x\in\Sigma_\kappa^{(N)}|\exists
	 k\in[(1-\alpha)N, N]\text{
	 	such that }|x_k-\eta_k|>N^{-\frac{2}{3}+\epsilon}(N+1-k)^{-1/3}\}.$$
	 Similarly, to prove the second conclusion of Theorem \ref{thm:main_thm}, we only need to show that $\P^{\mu_r}(A_N''\cap\Omega_N(\epsilon_*))$ and $\P^{\mu_r}(A_N'''\cap\Omega_N(\epsilon_*))$ are exponentially small.
	 
	 Similarly as above, if
	 \begin{align}\label{eq:254}
	 	\Upsilon(k_1,\ldots,k_q)\mathds{1}_{A_N''\cap\Omega_N(\epsilon_*)}(x)=1
	 \end{align}
	 then obviously for $N>N_0$ we have $k_i\in(0.99NR_i,1.01NR_i)$  $(\forall1\le i\le q)$,
	 thus there exists $k\in[1,\alpha N]\subset [1,\frac{\alpha}{0.99R_1}k_1]$ with
	 $$|x_k-\theta(1,k_1,k)|\ge|x_k-\eta_k|-|\eta_k-\theta(1,k_1,k)|\ge N^{-\frac{2}{3}+\epsilon} k^{-1/3}-C_2N^{-\frac{5}{3}+\epsilon_*}k^{2/3}> k_1^{-\frac{2}{3}+\frac{1}{2}\epsilon} k^{-1/3}.$$
	 Here $C_2>0$ is a constant and we used Lemma \ref{lemma:lemma_on_k_i}.
	 Therefore  if $N>N_0$ then
	 \begin{multline*}
	 	\mathbb P^{\mu_r}(A_N''\cap\Omega_N(\epsilon_*))\le\dfrac{\sum\limits_{k_1+\cdots+k_q=N\atop |k_i-NR_i|\le N^{\epsilon_*},\forall i}Q_{(k_1,\ldots,k_q)}\cdot\Phi^{(A_N'')}_{k_1,\ldots,k_q}}{\sum\limits_{k_1+\cdots+k_q=N}Q_{(k_1,\ldots,k_q)}}\\
	 	\le\dfrac{\sum\limits_{k_1+\cdots+k_q=N\atop |k_i-NR_i|\le N^{\epsilon_*},\forall i}Q_{(k_1,\ldots,k_q)}\cdot\mathbb P^{\nu^{(1,k_1,NR_1/k_1)}}\Big(\exists k\in[1,\frac{\alpha k_1}{0.99R_1}]\text{ s.t. }|x_k-\theta(1,k_1,k)|>k_1^{-\frac{2}{3}+\frac{1}{2}\epsilon}k^{-1/3}\Big)}{\sum\limits_{k_1+\cdots+k_q=N}Q_{(k_1,\ldots,k_q)}}
	 \end{multline*}
	 Notice that $\epsilon_*$ is a constant.	Using \eqref{eq:278}, we have that if $N>N_0$ 
	 \begin{align}\label{eq:279}
	 	\mathbb P^{\mu_r}(A_N''\cap\Omega_N(\epsilon_*))\le\exp(-k_1^{c_0})\le\exp(-N^{c_0/2})
	 \end{align}
	 where $c_0>0$ is the same as in \eqref{eq:278}. Similarly we can prove that \eqref{eq:279} is true if we replace $A_N''$ by $A_N'''$.
	 
	 So for $N>N_0$,
	 \begin{multline*}
	 	\P^{\mu_r}\Big(\Omega_N(\epsilon_*)\cap\Big\{x\in\Sigma_\kappa^{(N)}|\exists
	 	k\in[1,\alpha N]\cup[(1-\alpha)N, N]\text{
	 		s.t. }|x_k-\eta_k|>N^{-\frac{2}{3}+\epsilon}\hat k^{-1/3}\Big\}\Big)\\
	 	\le2\exp(-N^{c_0/2}).
	 \end{multline*}
	 Notice that $\kappa$ depends only on $V$. Using Lemma \ref{lemma:measures} we complete the proof of the second conclusion of Theorem \ref{thm:main_thm}.
	 
	 \section{Concentration for linear eigenvalue statistics: the proof of Theorem \ref{thm:fluctuation_multi_cut}}\label{appendix:proof_of_fluctuation_for_initial_model}
	 
	 To prove Theorem \ref{thm:fluctuation_multi_cut} we need to use the following theorem given by Theorem 2 of \cite{Shcherbina_fluctuation}. Similar result is obtained but formulated in a slightly different way in Theorem 1.6 of \cite{BG_multi_cut} . Recall that the support of the equilibrium measure $\rho(t)dt$ is $\sigma=[A_1,B_1]\cup\ldots\cup[A_q,B_q]$.

	 \begin{theorem}[Theorem 2 of \cite{Shcherbina_fluctuation}]\label{thm:fluctuation}
	 	Suppose $0<\epsilon<\frac{1}{10}\min\limits_{2\le i\le q}(A_i-B_{i-1})$. Suppose  $h_N(x)\in C^\infty(\mathbb R)$ and:
	 	\begin{itemize}
	 		\item $\text{supp}(h_N)\subset \cup_{i=1}^q[A_i-\epsilon,B_i+\epsilon]$;
	 		\item $\sup\limits_{N\in\mathbb N}\|h_N^{(i)}\|_\infty<\infty$ for $i\in\{0,1,\ldots,6\}$.
	 	\end{itemize}
	 	Then we have
	 	\begin{multline}\label{eq:2222}
	 		\E^\mu\Big[\exp\Big(\sum h_N(x_i)-N\int h_N(t)\rho(t)dt\Big)\Big]=e^{(\beta-2) L_1(h_N)+\beta L_2(h_N,h_N)}(1+\mathcal E_N)\\
	 		\times\dfrac{\sum\limits_{(v_1,\ldots,v_q)\in W_N}\exp\Big(-\frac{\beta}{2}(\mathcal Q^{-1}\Delta\bar v,\Delta\bar v)+\frac{\beta}{2}(\Delta\bar v,I[h_N])+(\frac{\beta}{2}-1)(\Delta\bar v,c_V)\Big)}{\sum\limits_{(v_1,\ldots,v_q)\in W_N}\exp\Big(-\frac{\beta}{2}(\mathcal Q^{-1}\Delta\bar v,\Delta\bar v)+(\frac{\beta}{2}-1)(\Delta\bar v,c_V)\Big)}
	 	\end{multline}
	 	where 
	 	\begin{itemize}
	 		\item  $W_N=\{(v_1,\ldots,v_q)\in \mathbb Z^q|\sum_{i=1}^q v_i=\sum_{i=1}^q\{NR_i\}\}$ where $\{\cdot\}$ denotes the fractional part: $\{x\}=x-\lfloor x \rfloor$ and $\lfloor x \rfloor$ is the largest integer no more than $x$;
	 		\item $(\cdot,\cdot)$ denotes the inner product in $\mathbb R^q$;
	 		\item $|\mathcal E_N|\le C_1N^{-w_0}$ where  $C_1>0$  and  $w_0>0$ are constants;
	 		\item $\mathcal Q$ is a $q\times q$ invertible  positive definite matrix determined by $V$;
	 		\item $I[h_N]$ is a $q$-dimensional vector  and each component is bounded by $C_2\|h_N\|_\infty$ where $C_2>0$ is a constant;
	 		\item the $q$-dimensional vector $c_V$ is determined by $V$;
	 		\item $\Delta\bar v=(v_1-\{NR_1\},\ldots v_q-\{NR_q\})\in\mathbb R^q$;
	 		\item $L_1(\cdot)$ is a linear functional. The map $L_1$ is independent of $N$ and $\beta$.
	 		\item $L_2(\cdot,\cdot)$ is a bilinear functional. The map $L_2$ is independent of $N$ and $\beta$. 
	 	\end{itemize}
	 \end{theorem}
	 \begin{remark}
	 	\cite{Shcherbina_fluctuation} provides explicit constructions of $L_1$ and $L_2$. See Page 1008-1009 of \cite{Shcherbina_fluctuation}. But we only use the two facts:  (i) they are linear and bilinear respectively; (ii) they are independent of $N$ and $\beta$.
	 \end{remark}
	 
	 \begin{remark}\label{remark:consistent}
	 	The term 
	 	\begin{align}\label{eq:2228}
	 		\sum\limits_{(v_1,\ldots,v_q)\in W_N}\exp\Big(-\frac{\beta}{2}(\mathcal Q^{-1}\Delta\bar v,\Delta\bar v)+\frac{\beta}{2}(\Delta\bar v,I[h_N])+(\frac{\beta}{2}-1)(\Delta\bar v,c_V)\Big)
	 	\end{align}
	 	on the right hand side of \eqref{eq:2222} is from Theorem 2 of \cite{Shcherbina_fluctuation}, but the corresponding term  in Theorem 1.6 of \cite{BG_multi_cut} is defined in a different way. As summarized on Page 19 of \cite{Lambert}, the term in \cite{BG_multi_cut} corresponding to \eqref{eq:2228} is
	 	\begin{align}\label{eq:2229}
	 		\sum\limits_{\bar k\in W_N'}\exp\Big(-\frac{1}{2}(\tau\bar k,\bar k)+(C_{\beta,V}+\frac{\beta}{2}\tilde I[h_N],\bar k)\Big)
	 	\end{align}
	 	where $\tau$ is a $(q-1)\times(q-1)$ positive definite matrix, $C_{\beta,V}$ is a  $(q-1)$-dimensional constant vector, $\tilde I$ is a $\mathbb R^{q-1}$-valued linear functional and
	 	$$W_N'=\{(k_1,\ldots,k_{q-1})|k_i-\{NR_i\}\in\mathbb Z \text{ for all }i\in[1,q-1]\}.$$
	 	We notice that \eqref{eq:2228} is consistent with \eqref{eq:2229} through
	 	\begin{align}\label{eq:2230}
	 		\tau_{ij}=\beta(\mathcal Q^{-1}_{ij}+\mathcal Q^{-1}_{qq}-\mathcal Q^{-1}_{qj}-\mathcal Q^{-1}_{qi})\quad (i,j\in\{1,\ldots,q-1\})
	 	\end{align}
	 	$k_i=\{NR_i\}-v_i$, 
	 	$(C_{\beta,V})_i=(\frac{\beta}{2}-1)[(c_V)_q-(c_V)_i]$ and $(\tilde I[h_N])_i=(I[h_N])_q-(I[h_N])_i$. The matrix given by \eqref{eq:2230} is positive definite because: (i) $\mathcal Q^{-1}$ is positive definite because $\mathcal Q$ is positive definite; (ii) for any $(x_1,\ldots,x_{q-1})\in\mathbb R^{q-1}$: 
	 	$$(x_1,\ldots,x_{q-1})\tau(x_1,\ldots,x_{q-1})^T=\beta\cdot(x_1,\ldots,x_{q-1},-\sum_{i=1}^{q-1}x_i) \mathcal Q^{-1}(x_1,\ldots,x_{q-1},-\sum_{i=1}^{q-1}x_i)^T.$$
	 \end{remark}
	 
	 \begin{lemma}\label{lemma:bound}
	 	Suppose the assumptions of Theorem \ref{thm:fluctuation} are true. Then the following results hold.
	 	\begin{enumerate}
	 		\item There exists a constant $C_3>0$ such that
	 		\begin{align}\label{eq:2242}
	 			\dfrac{\sum\limits_{(v_1,\ldots,v_q)\in W_N}\exp\Big(-\frac{\beta}{2}(\mathcal Q^{-1}\Delta\bar v,\Delta\bar v)+\frac{\beta}{2}(\Delta\bar v,I[h_N])+(\frac{\beta}{2}-1)(\Delta\bar v,c_V)\Big)}{\sum\limits_{(v_1,\ldots,v_q)\in W_N}\exp\Big(-\frac{\beta}{2}(\mathcal Q^{-1}\Delta\bar v,\Delta\bar v)+(\frac{\beta}{2}-1)(\Delta\bar v,c_V)\Big)}\le e^{C_3(1+\|h_N\|_\infty^2)}.
	 		\end{align}
	 		\item There exists a constant $C_4>0$ such that
	 		\begin{align}\label{eq:2232}
	 			|L_2(h_N,h_N)|\le C_4\cdot \|h_N'\|_\infty^2.
	 		\end{align}
	 		\item Suppose there exists an constant $\epsilon'\in(0,\epsilon)$ such that each $h_N$ can be analytically extended to $O:=\{z\in\mathbb C|\text{dist}(z,\sigma)<\epsilon'\}$. Then \begin{align}\label{eq:2231}
	 			|(\beta-2)L_1(h_N)|\le C_5\cdot\sup\limits_{z\in O}|h_N(z)|
	 		\end{align}
	 		where $C_5>0$ is a constant.
	 	\end{enumerate}
	 \end{lemma}
	 
	 \begin{proof}
	 	\begin{enumerate}
	 		\item  To prove \eqref{eq:2242}, we notice that $\mathcal L^{-1}$ is positive definite since $\mathcal L$ is positive definite. So $-\frac{\beta}{2}(\mathcal Q^{-1}x,x)\le -C_1'\|x\|_2^2$ (for all $x\in\mathbb R^q$) for some constant $C_1'>0$. Since the components of $I[h_N]$ is bounded by $\|h_N\|_\infty$ multiplied by a constant (see Theorem \ref{thm:fluctuation}), we have that
	 		$$-\frac{\beta}{2}(\mathcal Q^{-1}\Delta\bar v,\Delta\bar v)+\frac{\beta}{2}(\Delta\bar v,I[h_N])+(\frac{\beta}{2}-1)(\Delta\bar v,c_V)\le-C_1'\|\Delta\bar v\|_2^2+C_2'\|\Delta\bar v\|_2(\|h_N\|_\infty+1)$$
	 		where $C_2'>0$ is a constant. This together with the fact that $W_N\subset\mathbb Z^q$ imply that the numerator on the left hand side of \eqref{eq:2242} is bounded above by 
	 		\begin{multline}\label{eq:2243}
	 			\sum\limits_{(v_1,\ldots,v_q)\in \mathbb Z^q}\exp\Big(-C_1'\|\Delta\bar v\|_2^2+C_2'\|\Delta\bar v\|_2(\|h_N\|_\infty+1)\Big)\\
	 			\le \prod_{i=1}^q\sum_{v_i\in\mathbb Z}\exp\Big(-C_1'(\Delta\bar v)_i^2+C_2'(\|h_N\|_\infty+1)|(\Delta\bar v)_i|\Big).
	 		\end{multline}
	 		Since $(\Delta\bar v)_i=v_i-\{NR_i\}$, we see that $|(\Delta\bar v)_i|\le|v_i|+1$ and that if $|v_i|\ge2$ then $|(\Delta\bar v)_i|\ge |v_i|-1\ge|v_i|/2$. So there exists a constant $C_3'>0$  such that if $|v_i|\ge C_3'(\|h_N\|_\infty+1)$, then
	 		\begin{multline}
	 			-C_1'(\Delta\bar v)_i^2+C_2'(\|h_N\|_\infty+1)|(\Delta\bar v)_i|
	 			\le-\frac{C_1'}{4}v_i^2+C_2'(\|h_N\|_\infty+1)(1+|v_i|)\\
	 			\le-\frac{C_1'}{4}v_i^2+2C_2'(\|h_N\|_\infty+1)|v_i| 
	 			\le -C_1'v_i^2/5.
	 		\end{multline}
	 		Therefore the right hand side of \eqref{eq:2243} is bounded above by
	 		\begin{multline} \label{eq:2244}
	 			\prod_{i=1}^q\Big(\sum_{v_i\in\mathbb Z \text{ and}\atop|v_i|< C_3'(\|h_N\|_\infty+1)}e^{C_1'(\|h_N\|_\infty+1)|(\Delta\bar v)_i|}+\sum_{v_i\in\mathbb Z\text{ and}\atop|v_i|\ge C_3'(\|h_N\|_\infty+1)}e^{-C_1'v_i^2/5}\Big)\\
	 			\le \prod_{i=1}^q\Big(\sum_{v_i\in\mathbb Z \text{ and}\atop|v_i|< C_3'(\|h_N\|_\infty+1)}e^{C_1'(\|h_N\|_\infty+1)(1+C_3'(\|h_N\|_\infty+1))}+\sum_{v_i\in\mathbb Z}e^{-C_1'v_i^2/5}\Big)\quad(\text{since }|(\Delta\bar v)_i|\le|v_i|+1)\\
	 			\le e^{C_4'(1+\|h_N\|_\infty^2)}
	 		\end{multline}
	 		for some constant $C_4'>0$. Here we used the fact that $\sum_{v_i\in\mathbb Z}e^{-C_1'v_i^2/5}$ is a constant in the last inequality.  Since $\sum_{i=1}^qNR_i=N$, we see that $\sum_{i=1}^q\{NR_i\}=0$, so $(0,\ldots,0)\in W_N$ and the denominator  on the left  hand side of \eqref{eq:2242} is bounded   below by 
	 		\begin{multline}\label{eq:2245}
	 			\exp\Big(-\frac{\beta}{2}\sum_{i,j=1}^q\{NR_i\}\{NR_j\}\mathcal Q^{-1}_{ij}+\sum_{i=1}^q(1-\frac{\beta}{2})\{NR_i\}(c_V)_i\Big)\\
	 			\ge \exp\Big(-\frac{\beta}{2}\cdot q^2\cdot \max_{i,j}|\mathcal Q^{-1}_{ij}|-q(1+\frac{\beta}{2})\max_i|(c_V)_i|\Big).
	 		\end{multline}
	 		The right ahnd side of \eqref{eq:2245} is a constant. By \eqref{eq:2243}, \eqref{eq:2244} and \eqref{eq:2245} we complete the proof of the \eqref{eq:2242}.

	 		\item Now we prove \eqref{eq:2232}. Notice that   $L_2$ is an $N$-independent map, so in order to prove \eqref{eq:2232} we only need to prove it for an $N$-independent test function. Suppose  $f(x)$ is an $N$-independent smooth function such that $\text{supp}(f)\subset \cup_{i=1}^q[A_i-\epsilon,B_i+\epsilon]$. By \eqref{eq:2222} and \eqref{eq:2242},
	 		\begin{align}\label{eq:2236}
	 			\E^\mu\Big[\exp\Big(\sum f(x_i)-N\int f(t)\rho(t)dt\Big)\Big]\le c_1(f)
	 		\end{align}
	 		where $c_1(f)>0$ is an $N$-independent but $f$-dependent quantity.
	 		
	 		By Bolzano-Weierstrass' Theorem we can find a subsequence $N_1,N_2,\ldots$   of $1,2,\ldots$ such that $u_i:=\lim_{k\to\infty}\{N_kR_i\}$ exists  for each $i\in\{1,\ldots,q\}$. 
	 		Then as $k\to\infty$, along this subsequence we have
	 		\begin{multline}
	 			\lim\limits_{k\to\infty}	\E^\mu\Big[\exp\Big(\sum_{i=1}^{N_k} f(x_i)-N_k\int f(t)\rho(t)dt\Big)\Big]=e^{(\beta-2) L_1(f)+\beta L_2(f,f)}\\
	 			\times\dfrac{\sum\limits_{(v_1,\ldots,v_q)}\exp\Big(-\frac{\beta}{2}(\mathcal Q^{-1}(\bar v-\bar u),(\bar v-\bar u))+\frac{\beta}{2}((\bar v-\bar u),I[f])+(\frac{\beta}{2}-1)((\bar v-\bar u),c_V)\Big)}{\sum\limits_{(v_1,\ldots,v_q)}\exp\Big(-\frac{\beta}{2}(\mathcal Q^{-1}(\bar v-\bar u),(\bar v-\bar u))+(\frac{\beta}{2}-1)((\bar v-\bar u),c_V)\Big)}
	 		\end{multline}
	 		where $\mu=\mu(N_k)$, $(\bar v-\bar u)=(v_1-u_1,\ldots,v_q-u_q)$ and $(v_1,\ldots,v_q)$ runs over $$K_u:=\{(v_1,\ldots,v_q)\in \mathbb Z^q|\sum_{i=1}^q v_i=\sum_{i=1}^qu_i\}.$$ Replacing $f(x)$ by $tf(x)$ we see that the Laplace transform of $\sum_{i=1}^{N_k} f(x_i)-N_k\int f(t)\rho(t)dt$ converges to the Laplace transform of $X_f+Y_f$ where $X_f$ and $Y_f$ are $independent$ random variables such that
	 		\begin{align}\label{eq:2246}
	 			X_f\sim\mathcal N\Big((2-\beta)L_1(f),2\beta L_2(f,f)\Big)
	 		\end{align}
	 		\begin{align}\label{eq:2247}
	 			Y_f=(\bar w-\bar u,\frac{\beta}{2}I[f])
	 		\end{align}
	 		and the distribution of the random vector $\bar w$ is:
	 		$$\mathbb P(\bar w=\bar v)=\dfrac{\exp\Big(-\frac{\beta}{2}(\mathcal Q^{-1}(\bar v-\bar u),(\bar v-\bar u))+(\frac{\beta}{2}-1)((\bar v-\bar u),c_V)\Big)}{\sum\limits_{\bar v\in K_u}\exp\Big(-\frac{\beta}{2}(\mathcal Q^{-1}(\bar v-\bar u),(\bar v-\bar u))+(\frac{\beta}{2}-1)((\bar v-\bar u),c_V)\Big)}\quad\forall \bar v\in K_u.$$
	 		So  $\sum_{i=1}^{N_k} f(x_i)-N_k\int f(t)\rho(t)dt$ converges in distribution to $X_f+Y_f$ as $k\to\infty$. 
	 		
	 		We remark that the convergence along the subsequence was first observed by Pastur \cite{Pastur}. The method of studying the convergence along the subsequence was also used in \cite{Bekerman} and \cite{BG_multi_cut}. The distributions of $X_f$ and $Y_f$ are described in \cite{BG_multi_cut}. A good survey of this topic can be found in Section 4.1 of \cite{Lambert}.
	 		\begin{lemma}\label{lemma:var}
	 			$$\lim_{k\to\infty} \text{Var}\Big(\sum_{i=1}^{N_k} f(x_i)-N_k\int f(t)\rho(t)dt\Big)=\text{Var}(X_f+Y_f)$$
	 		\end{lemma}
	 		\begin{proof}
	 			Let $a_k=\sum_{i=1}^{N_k} f(x_i)-N_k\int f(t)\rho(t)dt$. In the following part of the proof of Lemma \ref{lemma:var}, when we consider $\E^\mu[\text{a function of }a_k]$ or $\P^\mu[\text{an event about }a_k]$ we mean $\mu=\mu(N_k)$. For any $y>0$, by \eqref{eq:2236},
	 			\begin{align}\label{eq:2237}
	 				\P^\mu(a_k>y)\le e^{-y}\E^\mu[e^{a_k}]\le c_1(f)e^{-y}.
	 			\end{align}
	 			Using \eqref{eq:2236} with $f$ replaced by $-f$ we have
	 			\begin{align}\label{eq:2238}
	 				\P^\mu(a_k<-y)=\P^\mu(-a_k>y)\le e^{-y}\E^\mu[e^{-a_k}]\le c_1(f)e^{-y}.
	 			\end{align}
	 			By \eqref{eq:2237} and \eqref{eq:2238},
	 			\begin{align}\label{eq:2239}
	 				\E^{\mu}[a_k^4]=4\int_0^\infty y^3\P^\mu(|a_k|>y)dy\le 8c_1(f)\int_0^\infty y^3e^{-y}dy=48c_1(f).
	 			\end{align}
	 			In \eqref{eq:2239} we used the fact that if $W\ge0$ and $p>0$ then $\E[W^p]=p\int_0^\infty y^{p-1}\P(W>y)dy$. See, for example, Lemma 2.2.13 of \cite{Durrett}.
	 			Since $a_k\to X_f+Y_f$ in distribution as $k\to\infty$, according to the Skorokhod's Theorem, there exist random variables $b_1$, $b_2$,\ldots and $b$ all defined on a same probability space such that
	 			\begin{itemize}
	 				\item $b_k\to b$ almost surely	as $k\to\infty$;
	 				\item $b_k$ has the same distribution as $a_k$ and $b$ has the same distribution as $X_f+Y_f$.
	 			\end{itemize}
	 			By \eqref{eq:2239}, we have $\E[b_k^4]\le48c_1(f)$. So for an arbitrarily large $M>0$ we have:
	 			\begin{multline*}
	 				\E[|b_k|\mathds1_{|b_k|>M}]\le \big(\E[b_k^4]\big)^{\frac{1}{4}}\big(\P(|b_k|>M)\big)^{\frac{3}{4}}\\
	 				\le(48c_1(f))^{\frac{1}{4}}\big(\P(|b_k|>M)\big)^{\frac{3}{4}}\rightarrow (48c_1(f))^{\frac{1}{4}}\big(\P(|b|>M)\big)^{\frac{3}{4}}\quad\text{as }k\to\infty,
	 			\end{multline*}
	 			and
	 			\begin{multline*}
	 				\E[b_k^2\mathds1_{b_k^2>M}]\le \sqrt{\E[b_k^4]\P(b_k^2>M)}\\
	 				\le\sqrt{48c_1(f)}\sqrt{\P(b_k^2>M)}\rightarrow \sqrt{48c_1(f)}\sqrt{\P(b^2>M)}\quad\text{as }k\to\infty
	 			\end{multline*}
	 			and they imply that both $\{b_k\}$ and $\{b_k^2\}$ are uniformly  integrable. This together with the fact that $b_k\to b$ almost surely yield: 
	 			\begin{align}\label{eq:2240}
	 				\lim_{k\to\infty}\E[b_k]=\E[b]\quad\text{and}\quad \lim_{k\to\infty}\E[b_k^2]=\E[b^2].
	 			\end{align}
	 			Since $b_k$ has the same distribution as $a_k$ and $b$ has the same distribution as $X_f+Y_f$, \eqref{eq:2240} must be true with $b_k$ replaced by $a_k$ and $b$ replaced by $X_f+Y_f$. So the proof of  Lemma \ref{lemma:var} is complete.
	 		\end{proof}
	 		To prove \eqref{eq:2232}, we use the fact that $L_2(f,f)$ is independent of $\beta$ (see Theorem \ref{thm:fluctuation}). Let $\beta=2$, then
	 		$$X_f\sim\mathcal N(0,4 L_2(f,f))$$
	 		It is well know that when $\beta=2$ we have
	 		\begin{align}\label{eq:2241}
	 			\text{Var}\Big(\sum f(x_i)-N\int f(t)\rho(t)dt\Big)\le C\cdot \sup_{x\in\mathbb R}|f'(x)|^2
	 		\end{align}
	 		where $C>0$ is a constant. This result can be found in Page 7-8 of \cite{Pastur}. Since $X_f$ and $Y_f$ are independent, we have by Lemma \ref{lemma:var} and \eqref{eq:2241}:
	 		\begin{multline*}
	 			4L_2(f,f)=\text{Var}(X_f)\le\text{Var}(X_f+Y_f)\\
	 			=\lim_{k\to\infty }\text{Var}\Big(\sum f(x_i)-N\int f(t)\rho(t)dt\Big)\le C\cdot \sup_{x\in\mathbb R}|f'(x)|^2.
	 		\end{multline*}
	 		So \eqref{eq:2232} is proved.
	 		
	 		\item Now we prove \eqref{eq:2231}. Similarly as above,  we only need to prove \eqref{eq:2231} for an $N$-independent test function since $L_1$ is an $N$-independent functional. Suppose  $g(x)$ is an $N$-independent smooth function such that
	 		\begin{itemize}
	 			\item $\text{supp}(g)\subset \cup_{i=1}^q[A_i-\epsilon,B_i+\epsilon]$;
	 			\item  $g(x)$ can be analytically extended to $O=\{z\in\mathbb C|\text{dist}(z,\sigma)<\epsilon'\}$.
	 		\end{itemize}
	 		Similarly as above, choose the subsequence $N_1,N_2,\ldots$ such that $u_i:=\lim_{k\to\infty}\{NR_i\}$ exists for $1\le i\le q$. Then as $k\to\infty$, $\sum_{i=1}^{N_k} g(x_i)-N_k\int g(t)\rho(t)dt$ converges in distribution to $X_g+Y_g$ where $X_g$ and $Y_g$ are independent random variables and their distributions are defined by  \eqref{eq:2246} and \eqref{eq:2247} with $f$ replaced by $g$.
	 		
	 		According to Proposition 5.2 and (8.19) of \cite{BG_multi_cut} with $s=-{\rm i}t$, there exists a function $W_1^{\{0\}}(\xi)$ such that
	 		\begin{itemize}
	 			\item $W_1^{\{0\}}(\xi)$ is determined by $V$ and is analytic on $\mathbb C\backslash\sigma$;
	 			\item the mean value of $X_g$ is 
	 			\begin{align}\label{eq:2233}
	 				\oint \frac{d\xi}{2\pi{\rm i}}g(\xi)W_1^{\{0\}}(\xi)
	 			\end{align}
	 			where the integral is along the contours $\cup_{i=1}^q\mathcal C_i$ and each $\mathcal C_i$ is contained by $O$ and  encloses $[A_i,B_i]$
	 		\end{itemize}
	 		By \eqref{eq:2233},
	 		$|(\beta-2) L_1(g)|$, i.e., the absolute value of the mean value of $X_g$, is bounded by $\sup\limits_{z\in O}|g(z)|$ multiplied by a constant. This yields \eqref{eq:2231}.
	 	\end{enumerate}
	 \end{proof}
	 
	 \begin{remark}
	 	We used results from \cite{BG_multi_cut} to prove  \eqref{eq:2231}. The Hypothesis 1.2 and Hypothesis 1.3 in \cite{BG_multi_cut} are automatically satisfied for our model because  the potential $V$ is analytic and is independent of $N$.
	 \end{remark}

	 \begin{proof}[Proof of Theorem \ref{thm:fluctuation_multi_cut}]
	 	
	 	Suppose $0<\epsilon<\frac{1}{10}\min\limits_{2\le i\le q}(A_i-B_{i-1})$. Suppose $\varphi(x)$ is a smooth function such that: (i) $\varphi(x)=1$ if $\text{dist}(x,\sigma)\le\epsilon/2$; (ii) $\varphi(x)=0$ if $\text{dist}(x,\sigma)\ge\epsilon$;  (iii) $\varphi(x)\in[0,1]$ for all $x\in\mathbb R$.
	 	
	 	Let $g_N(x)=h_N(x)\varphi(x)$. So by \eqref{eq:2207}, there are constants $N_0>0$  and $C_1'>0$  such that if $N>N_0$ then
	 	\begin{multline}\label{eq:2251}
	 		\P^\mu\Big(\Big|\sum\limits_{i=1}^Nh_N(x_i)-N\int h_N(t)\rho(t)dt\Big|>N^{w_1}\Big)\\
	 		\le 	\P^\mu\Big(\Big|\sum\limits_{i=1}^Ng_N(x_i)-N\int g_N(t)\rho(t)dt\Big|>N^{w_1}\Big)+	\P^\mu\Big(\exists i\text{ such that }\text{dist}(x_i,\sigma)>\epsilon/2\Big)\\
	 		\le\P^\mu\Big(\Big|\sum\limits_{i=1}^Ng_N(x_i)-N\int g_N(t)\rho(t)dt\Big|>N^{w_1}\Big)+e^{-C_1'N}. 
	 	\end{multline}
	 	Choose $\epsilon'\in(0,\epsilon/2)$ small enough such that 
	 	\begin{align}\label{eq:2248}
	 		O':=\{z\in\mathbb C|\text{dist}(z,\sigma)<\epsilon'\}\subset O.
	 	\end{align}
	 	By our assumption $h_N$ can be analytically extended to $O'$.
	 	According the construction of $g_N$, it can  be analytically extended in the same way as $h_N$ to $O'$. Moreover, 
	 	$$|g_N(z)|=|h_N(z)|\le C_b\quad\quad \text{for all }z\in O'\text{ and }N\ge1$$
	 	$$|g_N^{(i)}(x)|\le C_2'\max_{0\le k\le6}\|h_N^{(k)}\|_\infty\le C_2'\cdot C_b\quad\quad\text{for all } x\in\mathbb R, 0\le i\le6\text{ and }N\ge1$$
	 	where $C_2'$ is a constant.
	 	
	 	Using Theorem \ref{thm:fluctuation} and Lemma \ref{lemma:bound} for $g_N$:
	 	\begin{multline}\label{eq:2249}
	 		\P^\mu\Big(\sum\limits_{i=1}^Ng_N(x_i)-N\int g_N(t)\rho(t)dt>N^{w_1}\Big)\le e^{-N^{w_1}}\E^\mu\Big[e^{\sum\limits_{i=1}^Ng_N(x_i)-N\int g_N(t)\rho(t)dt}\Big]\\
	 		\le e^{-N^{w_1}}\exp\Big(C_5\cdot\sup_{z\in O'}|g_N(z)|+\beta\cdot C_4\|g_N'\|_\infty^2\Big)(1+C_1)\exp\Big(C_3(1+\|g_N\|_\infty^2)\Big)\\
	 		\le e^{-N^{w_1}}e^{C_5C_b+\beta C_4C_2'^2C_b^2}(1+C_1)e^{C_3(1+C_2'^2C_b^2)}\le e^{-\frac{1}{2}N^{w_1}}
	 	\end{multline}
	 	when $N>N_0$. The constants $C_1$, $C_3$, $C_4$ and $C_5$ are defined in Theorem \ref{thm:fluctuation} and Lemma \ref{lemma:bound}. Using Theorem \ref{thm:fluctuation} and Lemma \ref{lemma:bound} in the same way for $-g_N$ we have:
	 	\begin{multline} \label{eq:2250}
	 		\P^\mu\Big(\sum\limits_{i=1}^Ng_N(x_i)-N\int g_N(t)\rho(t)dt<-N^{w_1}\Big)\\
	 		=\P^\mu\Big(-\sum\limits_{i=1}^Ng_N(x_i)-N\int (-g_N(t))\rho(t)dt>N^{w_1}\Big)\le e^{-\frac{1}{2}N^{w_1}}
	 	\end{multline}
	 	for $N>N_0$. Using \eqref{eq:2251}, \eqref{eq:2249} and \eqref{eq:2250} we complete the proof.
	 \end{proof}
	 
	 \section*{Acknowledgement}
	 I thank my PhD advisor Mark Adler because this paper is based on my thesis. I also thank the following mathematicians for helpful communication: Paul Bourgade, L\'aszl\'o Erd\H{o}s, Alice Guionnet, Kurt Johansson, Kevin Schnelli, Mariya Shcherbina, Xin Sun, Horng-Tzer Yau and Zhiyuan Zhang.


\begin{thebibliography}{99}
	 	
	 	
	 	
	 	\bibitem{ADK} Alt, J.; Ducatez, R.;  Knowles, A.
	 	Extremal eigenvalues of critical Erd\H{o}s-R\'enyi graphs. Preprint, 2019. arXiv:1905.03243
	 	
	 	\bibitem{AGZ} Anderson, G.;  Guionnet, A.;  Zeitouni, O. {\it An
	 		introduction to random matrices.} Cambridge Studies in Advanced
	 	Mathematics, vol. 118. Cambridge University Press, Cambridge, 2010.
	 	
	 	\bibitem{Arguin+Belius+Bourgade} Arguin, L.-P.; Belius, D.;  Bourgade, P.
	 	Maximum of the characteristic polynomial of random
	 	unitary matrices. {\it Commun. Math. Phys.} {\bf 349}   (2017), 703--751.
	 	
	 	\bibitem{Bai+Yao} Bai, Z. D.; Yao, J. F.
	 	On the convergence of the spectral empirical process of Wigner matrices. {\it Bernoulli} {\bf 11}   (2005), 1059--1092.
	 	
	 	\bibitem{Baik+Lee} Baik, J.; Lee, J. O.
	 	Fluctuations of the free energy of the spherical Sherrington-
	 	Kirkpatrick model. {\it J. Stat. Phys.} {\bf 165}   (2016), 185--224.
	 	
	 	\bibitem{Basor+Widom} Basor, E. L.; Widom, H.
	 	Determinants of Airy operators and applications to random matrices. {\it J. Stat. Phys.} {\bf 96}   (1999), 1--20.
	 	
	 	\bibitem{BHKY} Bauerschmidt, R.;  Huang, J.;  Knowles, A.;  Yau, H.-T.
	 	Edge rigidity and universality of random regular graphs
	 	of intermediate degree. Preprint, 2019. arXiv:1910.10121v1
	 	
	 	
	 	\bibitem{Bekerman} Bekerman, F. Transport maps for $\beta$-matrix models in the multi-cut regime. {\it Random Matrices
	 		Theory Appl.} {\bf 7} (2018).
	 	
	 	\bibitem{BFG}  Bekerman, F.; Figalli, A.; Guionnet, A.
	 	Transport maps for $\beta$-matrix models and universality.
	 	{\it Commun. Math. Phys.} {\bf 338} (2013), 589--619.
	 	
	 	\bibitem{Bekerman+Leble+Serfaty}  Bekerman, F.; Lebl\'e, T.; Serfaty, S.
	 	CLT for 
	 	fluctuations of $\beta$-ensembles with general potential.
	 	{\it Electron. J. Probab.} {\bf 23} (2018).
	 	
	 	\bibitem{Bekerman+Lodhia} Bekerman, F.; Lodhia, A.
	 	Mesoscopic central limit theorem for general $\beta$-ensembles.
	 	{\it Ann. Inst.
	 		Henri Poincar\'e Probab. Stat.} {\bf 54} (2018), 1917--1938.
	 	
	 	\bibitem{BBK} Benaych-Georges, F.;  Bordenave, C.; Knowles, A.
	 	Spectral radii of sparse random matrices. {\it Ann. Inst. Henri Poincar\'e.} to appear.
	 	
	 	\bibitem{BeoutetdeMonvel+Khorunzhy}  Beoutet de Monvel, A.; Khorunzhy, A.
	 	Asymptotic distribution of smoothed eigenvalue density. I. Gaussian random
	 	matrices.
	 	{\it Random Oper. and Stoch. Equ.} {\bf 7} (1999), 1--22.
	 	
	 	\bibitem{BeoutetdeMonvel+Khorunzhy2}  Beoutet de Monvel, A.; Khorunzhy, A.
	 	Asymptotic distribution of smoothed eigenvalue density. II. Wigner random
	 	matrices.
	 	{\it Random Oper. and Stoch. Equ.} {\bf 7} (1999), 149--168.
	 	
	 	\bibitem{BG_one_cut} Borot, G.;  Guionnet, A.
	 	Asymptotic expansion of $\beta$ matrix models in the one-cut regime.
	 	{\it Commun. Math. Phys.} {\bf 317} (2013), 447--483.
	 	
	 	\bibitem{BG_multi_cut} Borot, G.;  Guionnet, A.
	 	Asymptotic expansion of $\beta$ matrix models in the multi-cut
	 	regime. Preprint, 2013. arXiv:1303.1045v4 
	 	
	 	\bibitem{BEY_duke} Bourgade, P.; Erd\H{o}s, L.; Yau, H.-T.
	 	Universality of general $\beta$-ensembles. {\it Duke Math. J.} {\bf 163} (2014), 1127--1190.
	 	
	 	\bibitem{BEY_bulk} Bourgade, P.;  Erd\H{o}s, L.; Yau, H.-T. Bulk
	 	universality of general $\beta$-ensembles with non-convex potential.
	 	{\it J. Math. Phys.} {\bf 53} (2012).
	 	
	 	\bibitem{BEY_edge} Bourgade, P.;  Erd\H{o}s, L.; Yau, H.-T. Edge
	 	universality of beta ensembles. {\it Comm. Math. Phys.} {\bf 322} (2014),
	 	261--353.
	 	
	 	\bibitem{BEYY} Bourgade, P.;  Erd\H{o}s, L.; Yau, H.-T; Yin, J. Fixed energy universality for generalized Wigner
	 	matrices. {\it Comm. Pure Appl. Math.} {\bf 69} (2016), 1815--1881.
	 	
	 	\bibitem{Bourgade+Mody} Bourgade, P.;   Mody, K. Gaussian fluctuations of the determinant of Wigner
	 	matrices. {\it Electron. J. Probab.} {\bf 24} (2019).
	 	
	 	\bibitem{Chhaibi+Madaule+Najnudel} Chhaibi, R.; Madaule, T.; Najnudel, J.
	 	On the maximum of the C$\beta$E field. {\it Duke Math. J.} {\bf 167} (2018), 2243--2345.
	 	
	 	\bibitem{Claeys+Fahs+Lambert+Webb} Claeys, T.; Fahs, B.; Lambert, G.; Webb, C.
	 	How much can the eigenvalues
	 	of a random Hermitian matrix 
	 	fluctuate? {\it Duke Math. J.} {\bf 170} (2021).
	 	
	 	
	 	\bibitem{Deift} Deift, P. {\it Orthogonal polynomials and random matrices: a Riemann-Hilbert approach.} Courant Lecture Notes in Mathematics 3.
	 	American Mathematical Society. Providence, RI, 1999.
	 	
	 	\bibitem{DG1}  Deift, P.; Gioev, D. Universality at the edge of the spectrum for unitary, orthogonal, and symplectic ensembles of random matrices. {\it Comm. Pure Appl. Math.} {\bf 60} (2007), 867--910. 
	 	
	 	\bibitem{DG2}   Deift, P.;  Gioev, D. Universality in random matrix theory for orthogonal and symplectic ensembles. {\it  Int. Math. Res. Papers} {\bf 2007}, 4--116. 
	 	
	 	\bibitem{DZ} Dembo, A.;  Zeitouni, O. {\it Large deviations techniques and applications.} Springer-Verlag, New York, 1998.
	 	
	 	\bibitem{DE}  Dumitriu, I.;  Edelman, A. Matrix models for beta ensembles. {\it J. Math. Phys.} {\bf 43} (2002), 5830--5847. 
	 	
	 	\bibitem{Durrett} Durrett, R. {\it Probability: theory and examples. }  Cambridge University Press, fifth edition, 2019.
	 	
	 	\bibitem{Erdos+Knowles+Yau+Yin} Erd\H{o}s, L.; Knowles, A.;  Yau, H.-T.;  Yin, J. Spectral statistics of Erd\H{o}s-R\'enyi graphs
	 	I: Local semicircle law. {\it Ann. Probab.} {\bf 41} (2013), 2279--2375.
	 	
	 	\bibitem{ERSY} Erd\H{o}s, L.;  Ram\'irez, J.-A.;  Schlein, B.;  Yau,  H.-T. Universality of sine-kernel for Wigner matrices with a small Gaussian perturbation.
	 	{\it Electronic Journal of Probability} {\bf 15} (2010), 526--603.
	 	
	 	\bibitem{Erdos+Schlein+Yau} Erd\H{o}s, L.; Schlein, B.;  Yau,  H.-T. Local semicircle law and complete delocalization
	 	for Wigner random matrices. {\it Comm. Math. Phys.} {\bf 287} (2009), 641--655.
	 	
	 	\bibitem{Erdos+Schlein+Yau2} Erd\H{o}s, L.; Schlein, B.; Yau,  H.-T. Semicircle law on short scales and delocalization
	 	of eigenvectors for Wigner random matrices. {\it Ann. Probab.} {\bf 37} (2009), 815--852.
	 	
	 	\bibitem{Erdos+Schlein+Yau3} Erd\H{o}s, L.; Schlein, B.; Yau,  H.-T. Universality of random matrices and local relaxation 
	 	flow. {\it Invent. Math.} {\bf 185} (2011), 75--119.
	 	
	 	\bibitem{Erdos+Yau+Yin} Erd\H{o}s, L.; Yau,  H.-T.; Yin, J. Bulk universality for generalized Wigner matrices. {\it Probab.
	 		Theory Related Fields} {\bf 154} (2012), 341--407.
	 	
	 	\bibitem{Erdos+Yau+Yin2}Erd\H{o}s, L.; Yau,  H.-T.; Yin, J. Rigidity of eigenvalues of generalized Wigner matrices. {\it Adv. Math.} {\bf 229} (2012), 1435--1515.
	 	
	 	\bibitem{FGSW}  Fan, C.;  Guionnet, A.;  Song, Y.;  Wang, A. Convergence of eigenvalues to the support of the limiting measure in critical $\beta$ matrix models. {\it Random Matrices
	 		Theory Appl.} {\bf 3}, (2015).
	 	
	 	\bibitem{Guionnet+Huang} Guionnet, A.;  Huang, J. Rigidity and edge universality of discrete $\beta$-ensembles. {\it Comm. Pure Appl. Math.} {\bf 72} (2019), 1875--1982.
	 	
	 	\bibitem{Gustavsson} Gustavsson, J. Gaussian fluctuations of eigenvalues in the GUE. {\it Ann. Inst. Henri Poincar\'e Probab. Stat.} {\bf 41} (2005), 151--178.
	 	
	 	\bibitem{He+Knowles} He, Y; Knowles, A. Mesoscopic eigenvalue statistics of Wigner matrices. {\it Ann. Appl. Probab.} {\bf 27} (2017), 1510--1550.
	 	
	 	\bibitem{Huang+Landon}  Huang, J.;  Landon, B. Local law and mesoscopic fluctuations of Dyson Brownian motion for general $\beta$ and potentials. {\it Probab. Theory Rel. Fields} {\bf 175} (2019), 209--253.
	 	
	 	
	 	\bibitem{Johansson} Johansson, K. On fluctuations of eigenvalues of random Hermitian matrices. {\it Duke
	 		Math. J.} {\bf 91} (1998), 151--204.
	 	
	 	\bibitem{Lambert} Lambert, G. Limit theorems for biorthogonal ensembles and related combinatorial identities. {\it Adv. Math.} {\bf 329} (2018), 590-648.
	 	
	 	
	 	
	 	\bibitem{Lambert2} Lambert, G. Mesoscopic central limit theorem for the circular $\beta$-ensembles and applications. {\it Electron. J. Probab.} {\bf 26} (2021).
	 	
	 	\bibitem{Lambert3} Lambert, G. Mesoscopic 
	 	fluctuations for unitary invariant ensembles. {\it Electron. J. Probab.} {\bf 23} (2018).
	 	
	 	
	 	\bibitem{Lambert+Ledoux+Webb} Lambert, G.; Ledoux, M; Webb, C. Stein's method for normal approximation of linear statistics of beta-ensembles. {\it Ann. Probab.} {\bf 47} (2019), 2619--2685.
	 	
	 	\bibitem{Landon+Yau} Landon, B.; Yau, H.-T. Convergence of local statistics of Dyson Brownian motion. {\it Comm. Math. Phys.} {\bf 355} (2017), 949--1000.
	 	
	 	\bibitem{Landon+Sosoe} Landon, B.; Sosoe, P. Applications of mesoscopic CLTs in random matrix theory. {\it Ann. Appl. Probab.} {\bf 30} (2020), 2769–2795.
	 	
	 	
	 	\bibitem{Lee+Schnelli+Stetler+Yau}  Lee, J. O.;  Schnelli, K.;  Stetler, B.; Yau, H.-T. Bulk universality for deformed Wigner
	 	matrices. {\it Ann. Probab.} {\bf 44} (2016), 2349--2425.
	 	
	 	\bibitem{Lee+Li}  Lee, J. O.;  Li, Y. Spherical Sherrington-Kirkpatrick model for deformed Wigner matrix with fast decaying edges. Preprint, 2021. arXiv:2112.14107
	 	
	 	\bibitem{Li+Schnelli+Xu}  Li, Y.;  Schnelli, K.;  Xu, Y. Central limit theorem for mesoscopic eigenvalue
	 	statistics of deformed Wigner matrices and sample covariance matrices. {\it Ann. Inst. H. Poincar\'e
	 		Probab. Statist.} {\bf 57} (2021), 506--546.
	 	
	 	
	 	\bibitem{Mehta} Mehta, M. {\it Random matrices.} Academic Press, New York, 1991.
	 	
	 	\bibitem{Min+Chen} Min, C.; Chen, Y.
	 	Linear statistics of random matrix ensembles at the spectrum edge associated with the Airy
	 	kernel. Preprint, 2018. arXiv:1806.11297v2
	 	
	 	\bibitem{Pastur} Pastur, L.
	 	Limiting laws of linear eigenvalue statistics
	 	for Hermitian matrix models. {\it J. Math. Phys.} {\bf 47} (2006), 103303.
	 	
	 	\bibitem{ORourke}  O'Rourke, S.
	 	Gaussian fluctuations of eigenvalues in Wigner random matrices.
	 	{\it J. Stat. Phys.} {\bf 138} (2010), 1045--1066.
	 	
	 	\bibitem{PS} Pastur, L.; Shcherbina, M.
	 	Bulk universality and related properties of Hermitian matrix models.
	 	{\it J. Stat. Phys.} {\bf 130} (2008), 205--250.
	 	
	 	\bibitem{Shcherbina} Shcherbina, M.
	 	Orthogonal and symplectic matrix models: universality and other
	 	properties. {\it Commun. Math. Phys.} {\bf 307} (2011), 761--790.
	 	
	 	\bibitem{Shcherbina_fluctuation} Shcherbina, M.
	 	Fluctuations of linear eigenvalue statistics of $\beta$ matrix models in the multi-cut regime. {\it J. Stat. Phys.} {\bf 151} (2013), 1004--1034.
	 	
	 	
	 	\bibitem{Shcherbina_change_of_variables} Shcherbina, M.
	 	Change of variables as a method to study general $\beta$-models: Bulk universality. {\it J. Math. Phys.} {\bf 55} (2014), 043504.
	 	
	 	\bibitem{Sosoe+Wong} Sosoe, P.;  Wong, P.
	 	Regularity conditions in the CLT for linear eigenvalue statistics of Wigner matrices.
	 	{\it Adv. Math.} {\bf 249} (2013), 37--87.
	 	
	 	\bibitem{Tao+Vu} Tao, T.;  Vu, V.
	 	Random matrices: sharp concentration of eigenvalues.
	 	{\it Random Matrices
	 		Theory Appl.} {\bf 2} (2013).
	 	
	 	\bibitem{VV}  Valk\'{o}, B.; Vir\'{a}g, B.
	 	Continuum limits of random matrices and the Brownian carousel.
	 	{\it Invent. Math.} {\bf 177} (2009), 463--508.
	 	
	 \end{thebibliography}
 \end{document}